\newtheorem{theorem}{Theorem}[section]
\newtheorem{proposition}[theorem]{Proposition}
\newtheorem{corollary}[theorem]{Corollary}
\newcommand{\cs}{\mathcal{C}}               
\newcommand{\pcs}{\mathrm{P}(\mathcal{C})}  
\newcommand{\ws}{\mathcal{W}}               
\newcommand{\qs}{\mathcal{Q}}               
\newcommand{\as}{\mathcal{A}}
\newcommand{\Ker}{\mathord{\mathrm{Ker}}}
\newcommand{\Id}{\mathord{\mathrm{Id}}}
\newcommand{\gsec}{\mathord{\mathrm{sec}}}
\newcommand{\csec}{\mathord{\mathrm{csec}}}
\newcommand{\nil}{\mathord{\mathrm{nil}}}
\newcommand{\RR}{\mathord{\mathbb{R}}}
\newcommand{\ZZ}{\mathord{\mathbb{Z}}}
\newcommand{\cx}{\mathord{\mathrm{cx}}}  
\newcommand{\TC}{\mathord{\mathrm{TC}}}
\newcommand{\cat}{\mathord{\mathrm{cat}}}
\title{COMPLEXITY OF THE FORWARD KINEMATIC MAP}
\author[Petar Pave\v{s}i\'c]{Petar Pave\v{s}i\'c}
\address{Faculty of Mathematics and Physics, University of Ljubljana, Ljubljana, Slovenia}
\email{\rm{petar.pavesic@fmf.uni-lj.si}}
\begin{document}
\begin{abstract}
The main objective of this paper is to introduce a new method for  qualitative analysis of various designs of robot arms.
To this end we define the complexity of a map, examine its main properties and develop some methods of computation.
In particular, when applied to a forward kinematic map associated to some robot arm structure, the complexity measures 
the inherent discontinuities that arise when one attempts to solve the motion planning problem for any set of input data. 
In the second part of the paper, we consider instabilities of motion planning in the proximity of singular points,  
and present explicit computations for several common robot arm configurations.
\end{abstract}
\maketitle

\section{Introduction}

In this paper we introduce and discuss a new qualitative measure of the complexity of a forward kinematic map from 
the configuration space
of the robot arm joints to the working space of the end-effector. 
Let us illustrate the problem on a familiar example of a robot arm
with  $n$ revolute joints. The position of the $i$-th joint is uniquely determined by some angle of rotation $\theta_i$, 
so we may identify the position
of each joint with a point on the unit circle $T$, and the combined position of all $n$ joints with an $n$-tuple of values 
$(\theta_1,\ldots,\theta_n)\in T^n=T\times\ldots\times T$ ($n$ factors). The position of the end-effector is determined 
by the spatial
location and the orientation, so it corresponds to a point $(\vec{r},R)$ in the cartesian product $\RR^3\times  SO(3)$ 
(here we identify the space of all possible orientation of a rigid body  with the set $SO(3)$ of all orthogonal $3\times 3$ matrices of determinant 1). The exact form of the resulting
 forward kinematic
map $F\colon T^n\to\RR^3\times SO(3)$ depends on the lengths and the respective inclinations of the axes, and is usually
given in terms of Denavit-Hartenberg matrices but the explicit formulae will not be relevant for the main part of our 
discussion.

The motion planning problem in this setting may be stated as follows: given an initial state of joint parameters $(\theta_1,\ldots,\theta_n)\in T^n$ and 
a required end-effector position $(\vec{r},R)\in\RR^3\times SO(3)$, find motions of the joints starting at $(\theta_1,\ldots,\theta_n)$ 
and ending in a position of joints such that the corresponding  end-effector position is $(\vec{r},R)$. 
The problem may be modelled as follows: let $P(T^n)$ denote the space of all possible paths in $T^n$ (i.e. continuous maps $\alpha\colon [0,1]\to T^n$ from the 
interval $[0,1]$ to the joint parameter space), and let $\pi\colon P(T^n)\to T^n\times (\RR^3\times SO(3))$ be the map that to each path assigns its starting
and ending position, $\pi(\alpha):=\big(\alpha(0),F(\alpha(1))\big)$. 
Then the solution of the motion planning problem can be viewed as an inverse map $\rho \colon T^n\times (\RR^3\times SO(3))\to P(T^n)$, with the property 
$\pi(\rho((\theta_1,\ldots,\theta_n),(\vec{r},R)))=((\theta_1,\ldots,\theta_n),(\vec{r},R))$. We are mainly interested 
in robust motion plans, such that a small perturbation of the initial 
data results in a comparatively small perturbation of the corresponding motion plan. 
In other words, we normally require that the map $\rho$ is continuous with respect to the input 
data.

The starting point of our investigation is the following fundamental observation (see Theorem \ref{thm: cx 1 implies section}):
\begin{center}
\begin{minipage}{120mm}
\emph{If there exists a robust global solution of the motion planning
problem for the map $F$, then there also exists a continuous global solution of the inverse kinematic problem for the map $F$.}
\end{minipage}  
\end{center}
Since in practice inverse kinematic solutions can be found 
only for a very restricted class of simple manipulators, it follows that a solution of the motion planning problem will almost always require 
a partition of the input data space into smaller domains, over which a robust motion plan can be constructed. The minimal number of 
domains that is needed to cover all possible input data measures 
the complexity of the motion planning for a given robot arm configuration. We are going to describe a mathematical model that will allow 
a clear definition of the complexity and develop several methods for its computation. 

\subsection{Prior work}

Motion planning is one of the basic problems in robotics, it has been extensively studied under all possible aspects, and there exists a vast literature
on the topic. We are going to rely on \cite{Hollerbach 89}  and \cite{Kavraki-LaValle} as basic references. In the standard formulation of the motion planning problem 
one specifies the  \emph{configuration space} $\cs$ of the robot device (which normally correspond to the set of joint parameters but may also take into account various 
limitations), the \emph{working space} $\ws$ (spatial position and orientation that can be reached by the robot), and  \emph{obstacle regions} in $\ws$. 
Then one considers \emph{queries} that consist of an \emph{initial configuration} $q_I\in\cs$ and a \emph{goal configuration} $q_G\in\cs$
(corresponding to the desired position and orientation of the end-effector), and asks
for a path $\rho\colon [0,1]\to\cs$ that avoids obstacles, and for which $\rho(0)=q_I$ and $\rho(1)=q_G$. 

The complexity of motion planning was mostly considered within the context of computational complexity: indeed, 
the search for explicit algorithms aimed to the solution of a given motion planning problem was often accompanied by more
general considerations regarding the algorithmic complexity of various solutions - 
see \cite{Reif 79} and \cite{Canny 88}. 
For a more recent study that extensively uses topological methods and is similar in spirit to our approach see
 \cite{Erdmann 10}.

A more geometrical measure for the complexity of motion planning was introduced by M. Farber \cite{Farber:TCMP} who observed that algorithmic 
solutions normally yield robust mapping plans. He then defined the concept of the \emph{topological complexity} of motion planning in the working space 
of a mechanical device as the minimal number of continuous partial solutions to the motion planning problem. In many cases the computation of 
the topological complexity of a space may be reduced to the computation of a very classical numerical invariant, called the Lusternik-Schnirelmann category. 
Indeed, in certain sense the study of the complexity of the geometric motion planning can be traced back to the 1930's, to the work in variational calculus by 
L. Lusternik and L. Schnirelmann. They introduced what is today called the \emph{Lusternik-Shnirelmann category} of a space, denoted $\cat(X)$, as a tool 
to estimate the number of critical points of a smooth map. Their work was widely extended both in analysis, most notably by J. Schwartz \cite{Schwartz} and 
R. Palais \cite{Palais}, and in topology,  by R. Fox \cite{Fox}, T. Ganea \cite{Ganea}, I. James \cite{James} and many others.  Today Lusternik-Schnirelmann 
category is a well-developed theory with many ramifications and methods of computation techniques that allow to systematically determine the category for most of the spaces 
that will appear in this paper - see \cite{CLOT}. It is interesting to see how this very classical and independently developed theory found its application
in the study of problems in robotics. For an overview of principal results on topological complexity see \cite{Farber:ITR}, for a less technical and very
readable account, see also \cite[Chapter 8]{Ghrist 14}.

The study of the complexity of a map is a natural continuation of Farber's work and was suggested as a problem by A. Dranishnikov during the conference on Applied 
Algebraic Topology in Castro Urdiales (Spain, 2014). In spite of strong formal similarities, the flavour of  this new theory is quite different from 
the topological complexity. A partial explanation can be found in some earlier work by J. Hollerbach \cite{Hollerbach} and D. Gottlieb \cite{Gottlieb:IEEE}, who studied 
the possibility to avoid singularities of  the forward kinematic map by introducing additional joints. They proved that under standard assumptions a forward kinematic 
map always has singularities and (with rare exceptions) does not admit  global inverse kinematics. As a consequence, the study of the complexity of a map seems to
be less amenable to purely homotopy-theoretical methods.

\subsection{Our contribution}

We introduce a general framework for the study of the complexity of a continuous map. Roughly speaking, 
the complexity of a map $F\colon\cs\to\ws$ is the minimal number of robust rules that take as input pairs of points $(c,w)\in\cs\times\ws$, and yield
paths $\rho=\rho(c,w)$ starting at $c$ and ending at some $c'$ that is mapped by $F$ to $w$. We first show that under the assumption that $F$ is regular and 
admits a right inverse (section), the computation of the complexity of $F$ can be reduced to the topological complexities of the spaces $\cs$ and $\ws$ in the sense of Farber.
However, a forward kinematic map usually satisfy these assumptions only locally, which causes considerable difficulties and require the development of entirely
new methods. As a main computational tool we introduce the concept of relative complexity of $F$ with respect to  suitably chosen subspaces of $\cs\times\ws$.  
In specific situations we normally proceed in two stages, by first finding a suitable decomposition of $\cs\times\ws$ and then using different 
estimates to determine the complexity of $F$ over each piece. In this way we are able to obtain good estimates for some  important practical cases, 
especially various combinations of revolute joints.

\subsection{Outline}

In the next section  we define the complexity of a forward kinematic map and compare it with some related concepts.
The central part of the paper are Sections 3 and 4. Section 3 contains the theoretical background and is divided into four subsections, where we consider
kinematic maps that admit inverses, regular kinematic maps, lower bounds related to some cohomological obstructions, and finally the general situation, 
where the kinematic map is not assumed to be regular or possess a global inverse. In Section 4 we apply the theory to estimate the complexity of several
important examples, including the system of two parallel joints, the universal joint, the triple-roll wrist and the 6-DOF joint. 
We suggest to the reader to read Section 4 in parallel with Section 3 in order to appreciate the significance of various theoretical results.

Throughout the paper we use the following standard notation: $T$ is the unit circle in the plane, $T^n$ is the cartesian product of $n$ circles,
$S^2$ is the two-dimensional sphere, and $SO(3)$ is the space of rotations of the three-dimensional space, or equivalently, of orthogonal 
$3\times 3$-matrices with determinant 1. Moreover ${\rm pr}_\cs$ denotes projection of a product to the factor $\cs$ and ${\rm gr}(F)$ denotes the graph
of the map $F$.

\section{Complexity of a map}

As mentioned in the Introduction, the basic example that we have in mind is the forward kinematic map of a robot arm with $n$ revolute joints. 
However, in order to allow other types 
of joints and arm configurations, we are going to work in a more general setting. Let us therefore consider an arbitrary forward kinematic map 
$F\colon \cs\to \ws$ from some \emph{configuration space} $\cs$ to a \emph{working space} $\ws$. Normally the space $\cs$ is the cartesian product of 
the parameter spaces for the individual joints (but the space may also be restricted to reflect  various obstacles and other conditions), 
while $\ws$ is a subspace of $\RR^3\times SO(3)$ (but may be enlarged to include the velocity and angular momentum of the end-effector). 
A motion of the arm is then simply a path in $\cs$, which we model by a continuous map $\rho\colon [0,1]\to \cs$.  
All theoretically possible motions of the arm are described by the set of all paths in the joint parameter space, which we denote by $\pcs$. 
The setting of time-scale as an interval between 0 and 1 allows a simple description of the initial and final stage of the motion, so we have a map
$\pi\colon \pcs\to \cs\times \ws$, given by $\pi(\rho)=(\rho(0),F(\rho(1))$, i.e. to each movement 
of the arm we assign its initial position of the joints
and the goal position of the end-effector. The pair $(c,w)\in\cs\times \ws$ representing an initial configuration $c$ and a goal position $w$ 
is often called a \emph{query}, and $\cs\times \ws$ is the \emph{query space}. A map $\rho\colon\cs\times \ws\to \pcs$ for which $\pi(\rho(c,w))=(c,w)$ is called 
a \emph{roadmap}, because it may be interpreted as a rule that to each initial configuration $c$ and goal position $w$ of the end-effector assigns a movement of joints
$\rho(c,w)$ that starts in the configuration $c$ and ends in the position $w$. More formally, a roadmap is a right inverse $\rho$ to the projection $\pi$. 
We require that the map $\rho$ is robust, which means that a small perturbation of the initial
data results in a small change of the path performed by the robot arm. In more mathematical terms, this amounts to the requirement that the map $\rho$ must be continuous.

We are now ready to state our first conclusion that may be viewed as the starting point of this study. 

\begin{theorem}
\label{thm: cx 1 implies section}
If a given forward kinematic map $F\colon \cs\to \ws$ admits a robust roadmap, then it also admits an continuous inverse kinematic map.
\end{theorem}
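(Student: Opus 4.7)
The plan is to produce a continuous section $s\colon\ws\to\cs$ of $F$ directly out of the given roadmap $\rho\colon\cs\times\ws\to\pcs$, by freezing the initial configuration and evaluating the resulting path at its endpoint. Concretely, I would pick any base configuration $c_0\in\cs$ (the statement is vacuous if $\cs=\emptyset$) and define
\[
s\colon\ws\longrightarrow\cs,\qquad s(w):=\rho(c_0,w)(1).
\]
The claim is that $s$ is the desired continuous inverse kinematic map.

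First I would verify the equation $F\circ s=\Id_\ws$. This is just unwinding the defining property of a roadmap: since $\pi(\rho(c_0,w))=(c_0,w)$ and $\pi(\alpha)=(\alpha(0),F(\alpha(1)))$ by definition of $\pi$, we get $F(\rho(c_0,w)(1))=w$, i.e. $F(s(w))=w$ for every $w\in\ws$.

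Next I would check continuity of $s$. The map $w\mapsto(c_0,w)$ from $\ws$ into $\cs\times\ws$ is continuous; composing with the continuous roadmap $\rho$ gives a continuous map $\ws\to\pcs$; and finally the endpoint evaluation $\ev_1\colon\pcs\to\cs$, $\alpha\mapsto\alpha(1)$, is continuous when $\pcs$ carries its standard (compact-open) topology. Hence $s=\ev_1\circ\rho\circ(c_0,-)$ is continuous as a composition of continuous maps.

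There is really no serious obstacle here; the construction is essentially tautological, and the only point that deserves a line of justification is the continuity of endpoint evaluation in the path space, which is standard. The substance of the theorem lies in its contrapositive interpretation, emphasised in the introduction: for the vast majority of forward kinematic maps no continuous global inverse exists, so no continuous global roadmap can exist either, which justifies the subsequent definition of complexity as the minimal number of local roadmaps needed to cover the query space.
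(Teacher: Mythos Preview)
Your proof is correct and follows exactly the same construction as the paper: fix a base configuration $c_0$, set $I(w):=\rho(c_0,w)(1)$, and read off $F(I(w))=w$ from the defining property $\pi\circ\rho=\Id$. The only difference is that you spell out the continuity of the endpoint evaluation, which the paper leaves implicit.
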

\begin{proof}
Let $\rho\colon \cs\times \ws\to \pcs$ be a robust roadmap for $F$, and let $c_0$ be some initial configuration of the robot arm. 
Then the formula $I(w):=\rho(c_0,w)(1)$ satisfies the relation $F(I(w))=w$ for every $w\in\ws$, which means that $I\colon \ws\to \cs$ is the inverse kinematic map for $F$.
\end{proof}

We are, of course, mostly interested in the negative aspect of the above result: since most forward kinematic maps that appear in practice do not admit a continuous
inverse kinematic map, it follows that most motion planning problems cannot have a global robust solution. We are therefore forced to look for robust 
solutions on subspaces of the query space $\cs\times \ws$. The minimal number of domains with robust roadmaps that cover all possible queries in
$\cs\times\ws$ will be called the \emph{complexity of the forward kinematic map} $F$ and will  be  denoted $\cx(F)$.

Observe that if we assume that $\cs$ and $\ws$ are the same space and that $F$ is  the identity map on $\cs$, then  $\cx(\Id_\cs)$ coincides with $\TC(\cs)$, 
the topological complexity of $\cs$ 
that we mentioned in the Introduction. Indeed, that case corresponds to motion planning within the configuration space of the robot arm, without taking into account
the relation to the working space, given by the forward kinematic map. The topological complexity of most of the spaces that are of interest for us has 
already been computed and can be found in the literature so we will  systematically reduce the computation of $\cx(F)$ to the computation of the topological
complexity of $\cs$ and $\ws$. Moreover, we will simplify the notation and write $\cx(\cs)$ and $\cx(\ws)$ instead of $\cx(\Id_\cs)$ and $\cx(\Id_\ws)$. 

Similarly as in the case of the topological complexity, we will not try to compute the complexity of $F$ directly but rather by 
finding suitable upper and lower estimates. Upper estimates are mostly based on explicit 
partitioning of the query space and description of corresponding robust roadmaps. Lower estimates are more subtle, as they require to theoretically demonstrate 
the impossibility to find a smaller number of robust roadmaps.

\section{Estimates of $\cx(F)$}

Let us introduce a more formal definition of the complexity that will ensure mathematical correctness of our conclusions. 
A space is said to be an  \emph{Euclidean Neighbourhood Retract} (short: ENR) if it can be obtained as a retract of some open subspace of $\RR^n$. 
This class includes most interesting geometric objects like manifolds, polyhedra, algebraic sets and other spaces that arise as configuration and working spaces 
of mechanical systems. 
We will assume that $\cs$ and $\ws$ are ENR spaces and will consider only zzs of $\cs$ and $\ws$ that are also ENR. 
A \emph{partial roadmap} for the forward kinematic map $F\colon\cs\to\ws$ is a continuous map $\rho\colon \qs\to \pcs$ whose domain $\qs$ is an ENR subspace
of the query space  $\cs\times \ws$, and $\rho$ is a right inverse for $\pi$, i.e. $\pi(\rho(c,w))=(c,w)$ for every query $(c,w)\in\qs$.
The \emph{complexity} of $F$ is the minimal integer $n$ for which $\cs\times\ws$ can be covered by $n$ partial roadmaps.  
We will usually take the domains of roadmaps to be disjoint but that requirement is not
part of the definition, and sometimes it may even be more natural to allow overlapping (for example, 
when we want to take into account various inaccuracies and noise that arise in real-world situations). 

\subsection{Invertible kinematic maps}

In this subsection we will work under the assumption that $F\colon \cs\to \ws$ admits a continuous inverse kinematic map $I\colon \ws\to\cs$, such that 
$F(I(w))=w$ for all $w\in \ws$. This assumption is rarely satisfied in practice, 
but we will be still able to apply the results when the inverse kinematic map
can be computed over parts of $\ws$ (i.e. avoiding singular points and gimbal lock positions). 
We have the following basic result that gives good upper and lower estimates for the complexity of $F$.

\begin{theorem}
\label{thm: sectioned F}
If the map $F\colon \cs\to \ws$ admits a right inverse $I\colon \ws\to \cs$ then 
$$\cx(\ws)\le \cx(F) \le \cx(\cs).$$
\end{theorem}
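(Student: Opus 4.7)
The plan is to prove both inequalities by transport arguments that use the right inverse $I$ to pass between $\cs\times\ws$ and $\cs\times\cs$ (for the upper bound) and between $\ws\times\ws$ and $\cs\times\ws$ (for the lower bound), while taking advantage of the identification $\cx(\cs)=\TC(\cs)$ and $\cx(\ws)=\TC(\ws)$ noted after Theorem \ref{thm: cx 1 implies section}.

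For the upper bound $\cx(F)\le\cx(\cs)$, I would start with $n:=\cx(\cs)=\TC(\cs)$ and a cover of $\cs\times\cs$ by ENR subsets $U_1,\ldots,U_n$ equipped with continuous partial sections $\sigma_i\colon U_i\to\pcs$ of the double path fibration. The idea is to turn a query $(c,w)\in\cs\times\ws$ into a pair of configurations by applying $I$ to the second coordinate. Concretely, I would introduce the continuous map $g\colon\cs\times\ws\to\cs\times\cs$ defined by $g(c,w):=(c,I(w))$, set $Q_i:=g^{-1}(U_i)$, and define $\rho_i\colon Q_i\to\pcs$ by $\rho_i(c,w):=\sigma_i(c,I(w))$. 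The roadmap property is immediate: $\rho_i(c,w)(0)=c$ and $F(\rho_i(c,w)(1))=F(I(w))=w$, so $\pi\circ\rho_i=\Id_{Q_i}$. Since the $U_i$ cover $\cs\times\cs$ their preimages $Q_i$ cover $\cs\times\ws$, giving $\cx(F)\le n$.

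For the lower bound $\cx(\ws)\le\cx(F)$, I would reverse the direction: given a cover of $\cs\times\ws$ by ENR subsets $Q_1,\ldots,Q_m$ with partial roadmaps $\rho_j\colon Q_j\to\pcs$, I want to produce $m$ continuous partial sections of the double path fibration over $\ws\times\ws$. The natural construction is to replace the initial working-space point by its preimage under $I$ and then project the resulting path in $\cs$ down to $\ws$. Formally, define the continuous map $h\colon\ws\times\ws\to\cs\times\ws$ by $h(w_1,w_2):=(I(w_1),w_2)$, put $V_j:=h^{-1}(Q_j)$, and set $\tau_j\colon V_j\to P(\ws)$, $\tau_j(w_1,w_2):=F\circ\rho_j(I(w_1),w_2)$. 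The evaluation at $0$ gives $F(I(w_1))=w_1$, and the evaluation at $1$ gives $w_2$ by the roadmap property of $\rho_j$, so the $\tau_j$ are genuine partial sections for $\TC(\ws)$, covering $\ws\times\ws$ because the $Q_j$ cover $\cs\times\ws$.

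The only genuine subtlety I expect is the ENR requirement on the pulled-back domains $Q_i=g^{-1}(U_i)$ and $V_j=h^{-1}(Q_j)$, since an arbitrary continuous preimage of an ENR need not be an ENR. The cleanest remedy is to work with the open-cover formulation of complexity (which is equivalent to the ENR formulation for ENR spaces, since open subsets of ENRs are again ENRs): then $U_i$ open in $\cs\times\cs$ and $Q_j$ open in $\cs\times\ws$ force $Q_i$ and $V_j$ to be open, hence ENR. Everything else is bookkeeping, and both inequalities follow.
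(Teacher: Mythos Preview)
Your proposal is correct and follows essentially the same route as the paper: both inequalities are obtained by pulling back partial roadmaps along the maps $(c,w)\mapsto(c,I(w))$ and $(w,w')\mapsto(I(w),w')$, with the new roadmaps given by exactly your formulas $\rho_i(c,w)=\sigma_i(c,I(w))$ and $\tau_j(w_1,w_2)=F\circ\rho_j(I(w_1),w_2)$. Your additional remark about the ENR condition on the pulled-back domains is a point the paper leaves implicit; your suggested fix via the equivalent open-cover formulation is appropriate.
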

\begin{proof}
To prove that $\cx(\ws)\le \cx(F)$ we will show that a partition of $\cs\times\ws$ into $n$ partial roadmaps for $F$ allows to generate a partition of 
$\ws\times\ws$ into $n$ partial roadmaps for $\Id_\ws$. For every partial roadmap $\rho\colon \qs\to \pcs$  the following formula 
$$\bar \rho(w,w'):=F\circ \rho(I(w),w')$$
clearly determines a partial roadmap on $\overline \qs=\{(w,w')\mid (I(w),w')\in \qs\}$. Moreover, if the domains  $\qs_1,\ldots,\qs_n$ cover 
$\cs\times \ws$ then the corresponding domains $\overline \qs_1,\ldots,\overline \qs_n$ cover $\ws\times\ws$. 

Similarly, given a subspace $\qs\subseteq \cs\times\cs$ and a partial roadmap $\rho\colon\qs\to\pcs$ for $\Id_\cs$, the formula 
$$\bar \rho(c,w):=\rho(c,I(w))$$
determines a partial roadmap on 
$$\overline \qs=\{(c,w)\mid (c,I(w))\in \qs\}.$$ 
If the domains $\qs_1,\ldots,\qs_n$ cover 
$\cs\times \cs$ then the corresponding domains $\overline \qs_1,\ldots,\overline \qs_n$ cover $\cs\times\ws$, therefore $\cx(F)\le\cx(\cs)$.
\end{proof}

The configuration space of a $n$-joint robot arm is the cartesian product of $n$-circles, $\cs=T^n$, whose topological complexity is 
exactly $n+1$ (see \cite[Theorem 13]{Farber:TCMP}), so if there exists a global robust inverse kinematic map for $F$, then $\cx(F)\le n+1$. 
On the other side, the complexity of the standard working space $\ws=\RR^3\times SO(3)$ is 4 (see \cite[Theorem 4.61]{Farber:ITR}), so in the presence 
of a global inverse kinematic map the motion planning requires at least 4 robust partial roadmaps. This surprising result explains many of the practical
difficulties that arise when we try to construct explicit motion planning algorithms. We will see later, that for systems that do not admit a global inverse kinematics
the minimal number of robust roadmaps may be even bigger.

We have seen in Theorem \ref{thm: cx 1 implies section} that if the complexity of a forward kinematic map $F$ is 1  then $F$ admits a global
inverse kinematic map, and so by Theorem \ref{thm: sectioned F} the complexity of the working space is also 1. By \cite[Theorem 1]{Farber:TCMP} 
spaces with complexity one are contractible (i.e continuously deformable to a point). This is possible only if the robot working space is a spatial domain
without obstacles, and its movement does not involve planar or spatial rotations (cf. \cite[Section 5.6.1]{Handbook}).

\subsection{Regular kinematic maps}

In this subsection we take into account the analytic properties of the forward kinematic map $F\colon\cs\to\ws$. In fact, $F$ is normally a smooth mapping
so we may consider its Jacobian $J(F)$ which can at every point be represented by an $m\times n$ matrix, where $m$ and $n$ are respectively the dimensions
of the spaces $\ws$ and $\cs$. In particular the dimension of the configuration space corresponds to the \emph{degrees of freedom} of the robot system.
If at some point $c\in\cs$ the rank of the Jacobian matrix of $F$ is not maximal, then from that point the device cannot move in all possible directions
in the working space. This is a common problem in robotic systems, known as a \emph{gimbal lock}. The forward kinematic map $F$ is \emph{regular} at $c\in\cs$ 
if the Jacobian matrix of $F$ at that point has maximal rank, otherwise $F$ is \emph{singular} at $c$. A forward kinematic map $F$ is \emph{regular} if 
it is regular at all points. Hollerbach \cite{Hollerbach}  proved that a forward kinematic map whose working space allows arbitrary rotations of the end-effector 
always has singular points, even if the system is redundant  (cf. also Gottlieb \cite{Gottlieb}). Nevertheless, in this section we are going to consider the 
complexity of regular forward kinematic maps as an intermediate step toward the general case. 

For our purposes, the main property of regular kinematic maps is that they allow to lift paths from the working space to the configuration space in 
the following sense. Let $c$ be a configuration in $\cs$, and let $\alpha$ be a path in $\ws$ (corresponding to a sequence of movements of the end-effector) 
that starts at $\alpha(0)=F(c)$. We may interpret the pair $(c,\alpha)$ as an input datum for the following task: find a sequence of motions of the joints 
that starts from the joint configuration $c$, and such that the end-effector performs exactly the movements prescribed by the path $\alpha$. It is a standard fact
of differential topology due to Ehresmann \cite{Ehresmann} that if $F$ is regular, then this task has a robust solution. 
More formally, let us denote by $\cs\sqcap P(\ws)=\{(c,\alpha)\in\cs\times P(\ws)\mid F(c)=\alpha(0)\}$  the space of input tasks for a robot system.
Then Ehresmann's theorem may be stated as follows.

\begin{proposition}
If the forward kinematic map $F\colon\cs\to\ws$ is regular, then there exists a continuous map $\Gamma\colon\cs\sqcap P(\ws)\to\pcs$ such that 
for all $(c,\alpha)\in \cs\sqcap P(\ws)$ we have $\Gamma(c,\alpha)(0)=c$ and $F\circ\Gamma(c,\alpha)=\alpha$.
\end{proposition}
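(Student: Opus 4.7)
The plan is to invoke Ehresmann's theorem to promote the regularity of $F$ to a locally trivial fibration, and then extract $\Gamma$ from the covering homotopy property. In the robotics setting the configuration space $\cs$ is a compact manifold (a product of circles, possibly with bounded interval factors), so the regular map $F$ is automatically proper. Ehresmann's fibration theorem then asserts that $F$ is a locally trivial smooth fibration, and in particular a Hurewicz fibration.

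A Hurewicz fibration $F\colon\cs\to\ws$ admits, essentially by definition, a continuous path-lifting function. Consider the pullback of $F$ along the evaluation map $\ev_0\colon P(\ws)\to\ws$, $\alpha\mapsto\alpha(0)$; this pullback is precisely $\cs\sqcap P(\ws)$, and the projection $\pcs\to \cs\sqcap P(\ws)$, $\beta\mapsto(\beta(0),F\circ\beta)$, is the induced fibration. Picking a continuous section of this projection gives exactly the map $\Gamma$ required by the proposition: the identities $\Gamma(c,\alpha)(0)=c$ and $F\circ\Gamma(c,\alpha)=\alpha$ are encoded in the statement that $\Gamma$ is a section.

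Explicitly, one can build $\Gamma$ as follows. By local triviality, there is an open cover $\{U_i\}$ of $\ws$ over which $F$ is a product; let $\varphi_i\colon F^{-1}(U_i)\to U_i\times F^{-1}(w_i)$ be the trivializations. Given $(c,\alpha)\in\cs\sqcap P(\ws)$, use the Lebesgue number lemma to subdivide $[0,1]$ into finitely many pieces on each of which $\alpha$ stays inside a single $U_{i_k}$. On each piece, the trivialization $\varphi_{i_k}$ transports the endpoint of the previous piece along $\alpha$ while preserving the fiber coordinate; concatenating the resulting local lifts yields $\Gamma(c,\alpha)$. A standard argument with a partition of unity subordinate to $\{U_i\}$ upgrades this assignment to a genuinely continuous map in $(c,\alpha)$.

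A more geometric alternative is to choose a smooth Ehresmann connection, i.e.\ a horizontal distribution $H\subset T\cs$ complementary to $\Ker dF$, and define $\Gamma(c,\alpha)$ as the horizontal lift of $\alpha$ starting at $c$, obtained by integrating a time-dependent vector field. The main obstacle in this approach is exactly that horizontal lifts can in principle escape $\cs$ in finite time and fail to exist on all of $[0,1]$; this is precisely where compactness/properness is needed, as it rules out such runaway solutions. Continuous dependence of $\Gamma$ on $(c,\alpha)$ then follows from the standard theorems on smooth dependence of solutions of ODEs on initial data and parameters.
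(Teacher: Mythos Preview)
The paper does not actually prove this proposition: it simply presents the statement as a reformulation of Ehresmann's fibration theorem and cites \cite{Ehresmann}. Your proposal is a correct and careful expansion of exactly that approach---you invoke Ehresmann's theorem to obtain a locally trivial (hence Hurewicz) fibration and then read off the path-lifting function $\Gamma$ as a section of the induced map $\pcs\to\cs\sqcap P(\ws)$. You also correctly flag the one subtlety the paper glosses over: Ehresmann's theorem needs properness of the submersion, which in the robotics setting follows from the compactness of $\cs$ (a product of circles). Both of your explicit constructions---via local trivializations and Lebesgue subdivision, or via horizontal lifting along an Ehresmann connection---are standard and valid ways to produce $\Gamma$.
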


The property stated in the proposition essentially means that we may solve every motion task in the configuration space, provided we are able to solve 
the corresponding task in the working space. Thus the following result does not come as a surprise.

\begin{theorem}
\label{thm: F fibration}
If the forward kinematic map $F\colon\cs\to\ws$ is regular then $$\cx(F)\le\cx(\ws).$$
\end{theorem}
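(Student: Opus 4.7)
The plan is to transport a partial-roadmap decomposition of $\ws\times\ws$ up to $\cs\times\ws$ by using the path-lifting map $\Gamma$ supplied by the preceding proposition. Concretely, suppose $\cx(\ws)=n$ and choose partial roadmaps $\sigma_i\colon\qs_i\to P(\ws)$ for $\Id_\ws$ whose domains $\qs_1,\ldots,\qs_n\subseteq\ws\times\ws$ cover $\ws\times\ws$. For each $i$ I would set
$$\overline{\qs}_i\;:=\;\{(c,w)\in\cs\times\ws\mid (F(c),w)\in\qs_i\}$$
and define
$$\bar\rho_i(c,w)\;:=\;\Gamma\bigl(c,\sigma_i(F(c),w)\bigr).$$
The inner expression makes sense because $\sigma_i(F(c),w)$ is a path in $\ws$ starting at $F(c)$, and $\Gamma$ is precisely designed to lift such pairs.

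Next I would check the three things required for $\bar\rho_i$ to be a partial roadmap for $F$. Continuity is immediate from the continuity of $F$, $\sigma_i$, and $\Gamma$. The starting-point condition $\bar\rho_i(c,w)(0)=c$ follows directly from the property $\Gamma(c,\alpha)(0)=c$. For the endpoint condition, the lifting identity $F\circ\Gamma(c,\alpha)=\alpha$ gives
$$F\bigl(\bar\rho_i(c,w)(1)\bigr)\;=\;\sigma_i(F(c),w)(1)\;=\;w,$$
so $\pi(\bar\rho_i(c,w))=(c,w)$ as required.

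Finally the covering property: given any $(c,w)\in\cs\times\ws$, the pair $(F(c),w)$ lies in some $\qs_i$, so $(c,w)\in\overline{\qs}_i$. Hence the $n$ sets $\overline{\qs}_1,\ldots,\overline{\qs}_n$ cover $\cs\times\ws$ and each carries a partial roadmap for $F$, yielding $\cx(F)\le n=\cx(\ws)$.

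The main subtlety I anticipate is the ENR bookkeeping demanded by the formal definition of complexity: one must verify that each $\overline{\qs}_i$ is an ENR subspace of $\cs\times\ws$. Since $\overline{\qs}_i=(F\times\Id_\ws)^{-1}(\qs_i)$, this reduces to the fact that the preimage of an ENR under a sufficiently nice map between ENRs is an ENR. For smooth (in particular regular) maps between manifolds, or more generally for locally trivial situations, this is standard, and one may also shrink the $\qs_i$ slightly to polyhedral or semi-algebraic subsets in the concrete examples of interest. I would insert a brief remark or citation handling this point rather than belabor it, since the geometric content of the argument is entirely contained in the lifting construction above.
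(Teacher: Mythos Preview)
Your proof is correct and follows essentially the same construction as the paper: pull back each partial roadmap $\rho\colon\qs\to P(\ws)$ along $F\times\Id_\ws$ and use $\Gamma$ to define $\bar\rho(c,w)=\Gamma(c,\rho(F(c),w))$ on $\overline\qs=(F\times\Id_\ws)^{-1}(\qs)$. Your version is in fact more explicit than the paper's, which does not verify the roadmap conditions in detail and does not mention the ENR point at all.
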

\begin{proof}
Let $\qs\subseteq \ws\times\ws$ and let $\rho\colon\qs\to P(\ws)$ be a partial roadmap for $\Id_\ws$. Then the formula 
$$\bar \rho(c,w):=\Gamma(c,\rho(F(c),w))$$
determines a partial roadmap on $\overline \qs=\{(c,w)\mid (F(c),w)\in \qs\}$. If the domains $\qs_1,\ldots,\qs_n$ cover 
$\ws\times \ws$ then the corresponding domains $\overline \qs_1,\ldots,\overline \qs_n$ cover $\cs\times\ws$, therefore $\cx(F)\le\cx(\ws)$.
\end{proof}

By combining Theorems \ref{thm: sectioned F} and \ref{thm: F fibration} we obtain the following corollary:

\begin{corollary}
\label{cor: sectioned fibration}
If the forward kinematic map is regular and admits an inverse kinematic map, then $\cx(F)=\cx(\ws)$.
\end{corollary}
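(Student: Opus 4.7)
The proof plan is essentially a direct sandwich argument, combining the two immediately preceding theorems. Since $F$ admits an inverse kinematic map, Theorem \ref{thm: sectioned F} gives the lower bound $\cx(\ws)\le \cx(F)$. Since $F$ is regular, Theorem \ref{thm: F fibration} gives the upper bound $\cx(F)\le \cx(\ws)$. Putting these two inequalities together forces $\cx(F)=\cx(\ws)$, which is exactly the statement of the corollary.

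So concretely, the steps I would write out are: (1) invoke Theorem \ref{thm: sectioned F} to obtain $\cx(\ws)\le \cx(F)$, remarking that only the left-hand inequality from that theorem is needed (the right-hand one, $\cx(F)\le\cx(\cs)$, plays no role here); (2) invoke Theorem \ref{thm: F fibration} to obtain $\cx(F)\le\cx(\ws)$; (3) conclude by antisymmetry of $\le$.

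There is essentially no obstacle, since the two hypotheses of the corollary are exactly the hypotheses needed to apply the two theorems separately, and the inequalities chain up perfectly. The only thing worth a remark is that, a priori, the assumptions of regularity and of admitting a right inverse are independent conditions, and one might worry that their combination interacts in a nontrivial way; but in fact no such interaction is needed for the proof, as each theorem only consumes one of the two hypotheses. The corollary is therefore a completely formal consequence of the preceding two results, and the proof itself will amount to little more than one or two sentences.
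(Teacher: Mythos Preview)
Your proposal is correct and matches the paper's approach exactly: the paper simply states that the corollary follows by combining Theorems~\ref{thm: sectioned F} and~\ref{thm: F fibration}, which is precisely the sandwich argument you describe.
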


In the next result we give a precise characterization of a query set that can admit a robust roadmap.

\begin{proposition}
\label{prop: cx=1}
Let $F\colon \cs\to\ws$ be a regular forward kinematic map. Then a set of queries $\qs\subset\cs\times\ws$ admits a robust partial roadmap $\rho \colon\qs\to\pcs$ 
if, and only if $\qs$ can be deformed (within $\cs\times\ws$) to the graph of $F$. 
\end{proposition}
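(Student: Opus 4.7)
The plan is to prove both implications, treating the forward direction as formal bookkeeping and the backward direction as the substantive one in which regularity of $F$ (via the lifting map $\Gamma$ of the preceding proposition) is decisive. As motivation, observe that $\mathrm{gr}(F)$ itself always admits the trivial partial roadmap $(c,F(c))\mapsto$ (constant path at $c$), so the content of the proposition is that admitting a roadmap is an invariant of the homotopy class of $\qs\hookrightarrow\cs\times\ws$ relative to $\mathrm{gr}(F)$.

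For $(\Rightarrow)$, suppose $\rho\colon\qs\to\pcs$ is a continuous partial roadmap, and define
$$H\colon \qs\times[0,1]\to\cs\times\ws,\qquad H((c,w),t):=(\rho(c,w)(t),\ w).$$
Continuity is inherited from $\rho$ via the path-space adjunction. At $t=0$ one recovers $(c,w)$, while at $t=1$ one arrives at $(\rho(c,w)(1),w)$, which lies in $\mathrm{gr}(F)$ because $\pi(\rho(c,w))=(c,w)$ forces $F(\rho(c,w)(1))=w$. Thus $H$ is the required deformation of $\qs$ onto $\mathrm{gr}(F)$ inside $\cs\times\ws$.

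For $(\Leftarrow)$, write a given deformation as $H=(\alpha,\beta)$ with $\alpha_q\colon[0,1]\to\cs$ and $\beta_q\colon[0,1]\to\ws$ depending continuously on $q=(c,w)\in\qs$, and satisfying $\alpha_q(0)=c$, $\beta_q(0)=w$, $F(\alpha_q(1))=\beta_q(1)$. The naive guess $\rho(c,w):=\alpha_q$ fails, because the endpoint $\alpha_q(1)$ lies over $\beta_q(1)$ rather than over $w$. Regularity repairs this: the reversed path $\bar\beta_q(t):=\beta_q(1-t)$ runs in $\ws$ from $F(\alpha_q(1))$ back to $w$, so $\gamma_q:=\Gamma(\alpha_q(1),\bar\beta_q)$ is a path in $\cs$ starting at $\alpha_q(1)$ with $F\circ\gamma_q=\bar\beta_q$; in particular $F(\gamma_q(1))=w$. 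I would then set $\rho(c,w):=\alpha_q\ast\gamma_q$, the concatenation reparameterized on $[0,1]$. Continuous dependence of $\rho$ on $(c,w)$ follows because it passes through $H$, path reversal, $\Gamma$, and concatenation; and $\pi(\rho(c,w))=(c,w)$ holds by construction.

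The principal obstacle is the recognition that the $\ws$-coordinate of the deformation $H$ carries precisely the corrective information needed to land over $w$ rather than over $\beta_q(1)$, and that regularity of $F$ is exactly what is needed to transport that correction from $\ws$ back into $\cs$. Once this is seen, the construction is explicit and the remaining verifications reduce to standard continuity arguments for operations on path spaces.
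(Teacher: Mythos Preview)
Your proof is correct and follows essentially the same approach as the paper. In both directions the constructions coincide: the forward deformation $H((c,w),t)=(\rho(c,w)(t),w)$ is exactly the paper's, and in the backward direction the paper likewise concatenates the $\cs$-projection of the deformation with a $\Gamma$-lift of the (reversed) $\ws$-projection to correct the endpoint from lying over $\beta_q(1)$ to lying over $w$.
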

\begin{proof}
Given a robust roadmap $\rho\colon\qs\to\pcs$ we can define a deformation $D\colon\qs\times [0,1]\to \cs\times\ws$ by the formula
$$D(c,w,t):=(\rho(c,w)(t),w).$$
Clearly, the initial stage of deformation is $D(c,w,0)=(\rho(c,w)(0),w)=(c,w)$, and the final stage is $D(c,w,1)=(\rho(c,w)(1),w)=(c',w)$,
where $F(c')=w$, therefore $D(\qs\times\{1\})$ is contained in the graph of $F$.

Conversely, if $D\colon\qs\times[0,1]\to\cs\times\ws$ is a deformation of $\qs$ to the graph of $F$, then the projections of $D(c,w,t)$  to $\cs$ and $\ws$ yield paths
$\alpha$ from $c$ to $c'$ in $\cs$ and $\alpha'$ from $w$ to $w'$ in $\ws$, such that $F(c')=w'$. Therefore, we may join the path $\alpha$ with the reverse of the lifting 
of $\alpha'$ to obtain a motion plan from $c$ to $w$. The corresponding formula for the roadmap $\rho\colon\qs\to\pcs$ is thus
$$\rho(c,w)(t):=\left\{\begin{array}{ll}
{\rm pr}_\cs(D(c,w,2t)) &;\ \ 0\le t\le\frac{1}{2}\\
\Gamma \big(({\rm pr}_\cs(D(c,w,1)),{\rm pr}_\ws(D(c,w,-))\big)(2-2t)&;\ \ \frac{1}{2}\le t\le 1\end{array}\right.
$$
\end{proof}

Observe that the regularity of $F$ was used only in the second half of the proof. In fact, a roadmap on $\qs$ always determines a deformation of $\qs$ to the graph of $F$,
and moreover, during the deformation the $\ws$-component is preserved. We will say that the roadmap defines a \emph{horizontal} deformation of $\qs$ to the graph.

\subsection{Cohomological lower bound}
\label{subsec: cohomology}

The upper bounds for the complexity of $F$ that we obtained in the last two subsections are actually constructive, 
being derived from the complexities of $\cs$ and $\ws$ for which suitable roadmaps can be explicitly described. 
On the other side, the lower bound in Theorem \ref{thm: sectioned F} depends indirectly on the lower bound for $\cx(\ws)$,
which is in turn based on some cohomological estimates as in \cite[Section 4.5]{Farber:ITR}. In this subsection we will 
obtain better estimates by considering the homomorphism in cohomology induced by the forward kinematic map. 

Let $H^*$ be any cohomology theory (e.g. de Rham, singular, \v Cech...) and assume that a set of queries $\qs$ admits 
a roadmap $\rho\colon\qs\to\pcs$. Then by the above discussion $\qs$ may be continuously deformed to the graph of $F$ which may be expressed by the following diagram
$$\xymatrix{ 
{\cs} \ar[rr]^{\Id_\cs\times F} & & {\cs\times\ws}\\
& {\qs} \ar[lu]^{d} \ar@{^(->}[ru]_i} 
$$
where $d(c,w)=\rho(c,w)(1)$, the image of  $\Id_\cs\times F$ is exactly the graph of $F$, and $(\Id_\cs\times F)\circ d$ 
is homotopic to the inclusion $i$. 
By applying the contravariant cohomology functor $H^*$ we obtain a commutative diagram of respective cohomology groups
$$\xymatrix{ 
H^*({\cs}) \ar[rd]_{d^*} & & {H^*(\cs\times\ws)}\ar[ll]_{\Id\times F^*}\ar[ld]^{i^*}\\
& {H^*(\qs)}  } 
$$
If a cohomology class $u$ is in the kernel of $\Id\times F^*$ then clearly $i^*(u)=0$. The kernel of $i^*$ coincides 
with the image of the homomorphism 
$$j^* \colon H^*(\cs\times\ws,\qs)\to H^*(\cs\times\ws),$$ 
therefore one can find a relative cohomology class $\bar u\in H^*(\cs\times\ws,\qs)$ such that $u=j^*(\overline u)$.
In particular, if $\cx(F)=1$ then every element of $\Ker(\Id\times F^*)$ is the image of some class in 
$H^*(\cs\times\ws,\cs\times\ws)=0$, and hence must be trivial. In other words, non-triviality of $\Ker(\Id\times F^*)$ 
implies that motion planning in $\cs\times\ws$ requires more than one robust roadmap. We are going to estimate 
the minimal number of necessary roadmaps by considering products of cohomology classes. 

Let $u_1,\ldots,u_n$ be elements of the kernel of $\Id\times F^*$, and let $Q_1,\ldots,Q_n$ be query sets that admit 
robust roadmaps and cover the entire query set $\cs\times\ws$. By the above argument one can choose representatives 
$\bar u_k\in H^*(\cs\times\ws,\qs_k)$ such that $u_k=j^*(\bar u_k)$. Then the cohomology product 
$$u_1\cdot\ldots\cdot u_n=j^*(\bar u_1)\cdot\ldots\cdot j^*(\bar u_n)=j^*(\bar u_1\cdot\ldots\cdot \bar u_n)=0,$$
because $\bar u_1\cdot\ldots\cdot \bar u_n\in H^*(\cs\times\ws,\qs_1\cup\ldots\qs_n)=H^*(\cs\times\ws,\cs\times\ws)=0$. 
Therefore, if $\cs\times\ws$ can be covered by $n$ robust roadmaps, then every 
product of $n$ elements of the kernel of $\Id\times F^*$ must be 0. The minimal $n$ for which the product of any $n$ 
elements in the ideal $\Ker(\Id\times F^*)$ is zero is called the \emph{nilpotency} of the ideal, and is denoted
 $\nil(\Ker(\Id\times F^*))$. We may now state the main result of this section.

\begin{theorem}
\label{thm: nilpotency}
The complexity of the map $F\colon\cs\to\ws$ is bounded below by $$\cx(F)\ge\nil (\Ker(\Id\times F^*)).$$
\end{theorem}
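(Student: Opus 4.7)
The plan is to formalize the argument already sketched in the paragraph preceding the theorem. Suppose $\cx(F)=n$ and choose ENR query sets $\qs_1,\ldots,\qs_n$ covering $\cs\times\ws$, each equipped with a partial roadmap $\rho_k\colon\qs_k\to\pcs$.

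First I would establish the key factorization. By the remark following Proposition \ref{prop: cx=1} (which notes that the easy direction of that characterization holds \emph{without} assuming $F$ is regular), each roadmap $\rho_k$ produces a horizontal homotopy $D_k\colon\qs_k\times[0,1]\to\cs\times\ws$ from the inclusion $i_k\colon\qs_k\hookrightarrow\cs\times\ws$ to the map $(\Id_\cs\times F)\circ d_k$, where $d_k(c,w):=\rho_k(c,w)(1)$. Choosing any cohomology theory $H^*$ with cup product, this yields $i_k^*=d_k^*\circ(\Id\times F^*)$, so every $u\in\Ker(\Id\times F^*)$ satisfies $i_k^*(u)=0$.

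Next I would invoke exactness of the long exact sequence of the pair $(\cs\times\ws,\qs_k)$ at the term $H^*(\cs\times\ws)$: this identifies $\Ker(i_k^*)$ with the image of the natural map $j_k^*\colon H^*(\cs\times\ws,\qs_k)\to H^*(\cs\times\ws)$. Hence each $u\in\Ker(\Id\times F^*)$ admits a relative lift $\bar u_k\in H^*(\cs\times\ws,\qs_k)$ with $j_k^*(\bar u_k)=u$. Given arbitrary $u_1,\ldots,u_n\in\Ker(\Id\times F^*)$ I would then pick such lifts and use naturality of the relative cup product to compute
$$u_1\cdot u_2\cdots u_n \;=\; j^*(\bar u_1\cdot\bar u_2\cdots\bar u_n)\;\in\; j^*\bigl(H^*(\cs\times\ws,\qs_1\cup\cdots\cup\qs_n)\bigr)\;=\;0,$$
since $\qs_1\cup\cdots\cup\qs_n=\cs\times\ws$ forces the relative cohomology to vanish. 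Thus every $n$-fold product in the ideal $\Ker(\Id\times F^*)$ is zero, giving $\nil(\Ker(\Id\times F^*))\le n=\cx(F)$.

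The principal technical obstacle is justifying the relative cup product step: the standard pairing $H^*(X,A)\otimes H^*(X,B)\to H^*(X,A\cup B)$ requires an excision-type hypothesis on the cover $\{A,B\}$. This is precisely where the blanket ENR assumption on $\cs$, $\ws$ and the $\qs_k$ pays off: using the ENR retraction one can thicken each $\qs_k$ slightly to an open neighborhood on which the roadmap still extends, so that $\{\qs_1,\ldots,\qs_n\}$ behaves homotopically like an open cover and the iterated relative cup product is well defined with the claimed target. Once this technical point is handled, the rest of the proof is the diagram chase already outlined by the author.
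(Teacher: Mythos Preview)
Your proposal is correct and follows essentially the same argument as the paper, which presents the proof in the discussion immediately preceding the theorem statement rather than in a separate proof environment. The only addition is your explicit treatment of the excision hypothesis needed for the relative cup product $H^*(X,A)\otimes H^*(X,B)\to H^*(X,A\cup B)$, a point the paper leaves implicit in its blanket ENR assumption.
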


See Section \ref{subsec: 6DOF} for an application of the cohomological estimate. 

\subsection{General case}

Let us now consider the general case that arises in practice, i.e. a forward kinematic map $F\colon\cs\to\ws$ 
that may have some singularities and that admit only partially defined inverse kinematic maps. 
The results and methods obtained in the previous sections will produce estimates of the complexity of $F$ 
over subspaces of the query space. In order to combine these into global estimates for the complexity of $F$, 
we will need a concept of relative complexity of $F$ over  subspaces of the query space.

Let $\as$ be any ENR subspace of the query space $\cs\times\ws$. The \emph{relative complexity} $\cx(F|\as)$ of $F$ over $\as$ is the minimal number of robust roadmaps 
that are needed to cover all points of $\as$. 

The relative complexity subsume as special instance several previously studied concepts. If we take the identity map on $\cs$ and a subset $\as\subseteq\cs\times\cs$
then $\cx(\Id_\cs|\as)=\cx(\cs|\as)$ coincides with the relative topological complexity (cf. \cite[Section 4.3]{Farber:ITR}). In many applications $\as$ will be a product
of the form $\cs'\times\ws'$ where $\cs'\subseteq\cs$ and $\ws'\subseteq\ws$, corresponding to the complexity of the task to navigate from configurations in $\cs'$ 
to end-effector positions in $\ws'$.  Another important
special case is $\cx(\cs|{c_0}\times \cs)$ which coincides with the \emph{Lusternik-Schnirelmann category} of $\cs$ (cf. \cite[Lemma 4.29]{Farber:ITR}). 

In the next proposition we collect the main properties of the relative complexity. 

\begin{proposition}
\label{prop: rel cx}
Let $F\colon\cs\to\ws$ be a map, whose graph we denote by ${\rm gr}(F)$, and let $\as,\as'$ be ENR subspaces of $\cs\times \ws$. Then the relative complexity of 
$F$ satisfies the following relations.
\begin{enumerate}
\item $\cx(F|{\rm gr}(F))=1$;
\item $\as\subseteq\as' \  \  \implies \  \  \cx(F|\as)\le\cx(F|\as')$;
\item $\cx(F|\as\cup\as')\le\cx(F|\as)+\cx(F|\as')$;
\item If $\as'$ can be horizontally deformed into $\as$ then $\cx(F|\as)\ge\cx(F|\as')$.
\end{enumerate}
\end{proposition}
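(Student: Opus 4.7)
The plan is to handle the four items in the order stated, since each successive item is either immediate from the definitions or uses the same horizontal-deformation idea that appeared in the proof of Proposition \ref{prop: cx=1}.

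For (1), I would observe that every point of the graph has the form $(c,F(c))$, so the constant path roadmap $\rho(c,F(c))(t):=c$ is well-defined and continuous on ${\rm gr}(F)$, and $\pi(\rho(c,F(c)))=(c,F(c))$ by construction; hence a single partial roadmap suffices. For (2), any covering of $\as'$ by partial roadmaps restricts to a covering of $\as$ by at most the same number of partial roadmaps (the restriction of a roadmap to an ENR subspace is still a roadmap). For (3), simply take a cover of $\as$ by $\cx(F|\as)$ partial roadmaps and a cover of $\as'$ by $\cx(F|\as')$ partial roadmaps; their union covers $\as\cup\as'$.

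The real content is in (4), and this will be the main step. By the remark following Proposition \ref{prop: cx=1}, a horizontal deformation is a continuous map $D\colon\as'\times[0,1]\to\cs\times\ws$ with $D(x,0)=x$, $D(\as'\times\{1\})\subseteq\as$, and $\pr_\ws\circ D(c,w,t)=w$ for all $t$. Given a cover of $\as$ by partial roadmaps $\rho_i\colon\qs_i\to\pcs$ with $\qs_i\subseteq\as$, I would pull them back by setting
$$\overline\qs_i:=\{(c,w)\in\as'\mid D((c,w),1)\in\qs_i\}.$$
These are ENR subspaces of $\as'$ and cover $\as'$ because $D(\as'\times\{1\})\subseteq\as$. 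On each $\overline\qs_i$ I would define a roadmap by concatenating the $\cs$-component of the deformation with the given roadmap: for $(c,w)\in\overline\qs_i$, set $c':=\pr_\cs(D((c,w),1))$ (so that $(c',w)\in\qs_i$ by horizontality) and define
$$\bar\rho_i(c,w)(t):=\begin{cases} \pr_\cs\big(D((c,w),2t)\big) & 0\le t\le\tfrac{1}{2},\\ \rho_i(c',w)(2t-1) & \tfrac{1}{2}\le t\le 1.\end{cases}$$

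I would then verify the three properties: continuity in $(c,w)$ (each branch is a composition of continuous maps, and at $t=\tfrac12$ both branches agree at $c'$); the starting point $\bar\rho_i(c,w)(0)=\pr_\cs(D((c,w),0))=c$; and the endpoint condition $F(\bar\rho_i(c,w)(1))=F(\rho_i(c',w)(1))=w$, using that $\rho_i$ is a roadmap for $F$. Thus each $\bar\rho_i$ is a partial roadmap, and since the $\overline\qs_i$ cover $\as'$, we obtain $\cx(F|\as')\le\cx(F|\as)$. The subtlety to watch for is that horizontality is used twice: once to guarantee $(c',w)$ (not $(c',w')$ for some drifted $w'$) lies in $\qs_i$, and once to ensure the concatenated path respects the projection $\pi$.
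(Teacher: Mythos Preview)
Your proof is correct and follows essentially the same approach as the paper: items (1)--(3) are dismissed as immediate, and for (4) you pull back the domains $\qs_i$ along $D(-,1)$ and concatenate the $\cs$-projection of the deformation with the given roadmap, exactly as the paper does. Your write-up is in fact slightly more detailed, spelling out the verification of the roadmap conditions and the role of horizontality.
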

\begin{proof}
The first three statements are self-evident, and only the last requires some proof. A horizontal deformation of $\as'$ into $\as$ is a continuous 
map $D\colon \as'\times [0,1]\to\cs\times \ws$, such that $D(c,w,0)=(c,w)$, $D(c,w,1)\in\as$ and ${\rm pr}_\ws(D(c,w,t))=w$ for all $(c,w)\in\as'$ and $t\in[0,1]$.
Let $\rho\colon \qs\to\pcs$ be a robust roadmap for some $\qs\subseteq\as$. Then we may define a robust roadmap $\rho'\colon\qs'\to\pcs$, where
$\qs'=\{(c,w)\in\as'\mid D(c,w,1)\in\qs\}$ and 
$$\rho'(c,w)(t)=\left\{\begin{array}{ll}
{\rm pr}_\cs (D(c,w,2t)) &;\ \ 0\le t\le \frac{1}{2}\\
\rho(D(c,w,1))(2t-1) &; \ \ \frac{1}{2}\le t\le 1 \end{array}\right.
$$
If the domains $\qs_1,\ldots,\qs_n$ cover $\as$, then the corresponding domains $\qs'_1,\ldots,\qs'_n$ cover $\as'$, therefore $\cx(F|\as')\le\cx(F|\as)$.
\end{proof}

Our next objective is to extend the results of the previous subsections to forward kinematic maps that have singular points. Let $\ws^r$ denote
the set of \emph{regular values} of the forward kinematic map $F\colon\cs\to\ws$, i.e. the set of $w\in\ws$ such that all configurations in the pre-image 
$F^{-1}(w)\subset \cs$ are regular for $F$. In practice this means that whenever the robot device position is in $\ws^r$ it can been moved in all directions in 
the working space, regardless of the position of joints. Moreover, let $\cs^r:=F^{-1}(\ws^r)$ the subspace of regular configurations that map to positions in $\ws^r$.
Then the restriction $F\colon\cs^r\to\ws^r$ is regular and we may extend our previous results on the complexity of regular maps.

\begin{theorem}
\label{thm: rel F fibration}
Let $F\colon\cs^r\to\ws^r$ be the restriction of the forward kinematic map to the subspace of $\cs$ where $F$ has regular values. Then
\begin{enumerate}
\item $\cx(F|\cs^r\times\ws^r)\le \cx(\ws^r)$;
\item $\cx(F|\cs\times\ws^r)\le \cat(\cs\times\ws^r)$.
\end{enumerate}
\end{theorem}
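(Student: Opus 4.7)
The plan is to reduce both parts to results already established for regular maps, applied to the restriction $F\colon\cs^r\to\ws^r$. This restriction is regular by the very definition of $\cs^r=F^{-1}(\ws^r)$, so Proposition 3.5 supplies a continuous path-lifting map $\Gamma\colon\cs^r\sqcap P(\ws^r)\to P(\cs^r)$.

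For part (1), I would rerun the proof of Theorem \ref{thm: F fibration} with $\cs,\ws$ replaced by $\cs^r,\ws^r$. Given a partial roadmap $\rho\colon\qs\to P(\ws^r)$ for $\Id_{\ws^r}$ on some $\qs\subseteq\ws^r\times\ws^r$, the formula
$$\bar\rho(c,w):=\Gamma(c,\rho(F(c),w))$$
defines a partial roadmap for $F$ on $\overline\qs=\{(c,w)\in\cs^r\times\ws^r\mid(F(c),w)\in\qs\}$. A cover of $\ws^r\times\ws^r$ by $\cx(\ws^r)$ partial roadmaps pulls back to a cover of $\cs^r\times\ws^r$ by the same number of partial roadmaps for $F$, which is exactly the asserted inequality.

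For part (2), I would use the definition of Lusternik--Schnirelmann category. Fix $c_0\in\cs^r$ and set $w_0:=F(c_0)\in\ws^r$, and let $U\subseteq\cs\times\ws^r$ be an ENR subset contractible in $\cs\times\ws^r$. Using path-connectedness of $\cs\times\ws^r$ we may arrange the contraction to end at $(c_0,w_0)$, giving a homotopy $H=(H_\cs,H_\ws)\colon U\times[0,1]\to\cs\times\ws^r$ with $H(c,w,0)=(c,w)$ and $H(c,w,1)=(c_0,w_0)$. For $(c,w)\in U$, set
$$\bar\rho(c,w)(t):=\begin{cases}H_\cs(c,w,2t) & 0\le t\le\tfrac{1}{2},\\ \Gamma(c_0,\tilde\beta_{c,w})(2t-1) & \tfrac{1}{2}\le t\le 1,\end{cases}$$
where $\tilde\beta_{c,w}(s):=H_\ws(c,w,1-s)$ reverses the $\ws^r$-component of $H$. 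The first half is a path in $\cs$ from $c$ to $c_0$; the second half, defined because $c_0\in\cs^r$ and $\tilde\beta_{c,w}(0)=w_0=F(c_0)$, lifts the reverse path from $w_0$ back to $w$ through $\cs^r$ and terminates at a preimage of $w$. The two halves match at $c_0$, and continuity in $(c,w)$ is inherited from that of $H$ and $\Gamma$. Covering $\cs\times\ws^r$ by $\cat(\cs\times\ws^r)$ such contractible ENR pieces yields the desired estimate.

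The one nontrivial point is ensuring that the endpoint of each contraction lies in $\cs^r$: the $\ws^r$-coordinate is automatic, but the $\cs$-coordinate of the contraction terminus need not be a regular configuration, whereas Ehresmann's lifting $\Gamma$ can only be anchored at a point of $\cs^r$. The cure, as above, is to use path-connectedness of $\cs\times\ws^r$ to reroute each local contraction to a fixed base pair $(c_0,F(c_0))$ with $c_0\in\cs^r$; this is the only place in the argument where a subtle choice is required.
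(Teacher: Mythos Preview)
Your proposal is correct and follows essentially the same route as the paper: part~(1) is reduced to Theorem~\ref{thm: F fibration} applied to the regular restriction $F\colon\cs^r\to\ws^r$, and for part~(2) you build a roadmap on each categorical subset by concatenating the $\cs$-projection of the contracting homotopy with the Ehresmann lift of the reversed $\ws^r$-projection, after arranging the terminal point of the contraction to lie on $\mathrm{gr}(F)$. The ``nontrivial point'' you flag is exactly what the paper handles with the clause ``we may assume without loss of generality that $(c_0,w_0)\in\mathrm{gr}(F)$'', and your observation that this forces $c_0\in\cs^r$ (since $w_0\in\ws^r$ and $F(c_0)=w_0$) makes explicit what the paper leaves implicit.
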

\begin{proof}
Statement (1) is a direct application of Theorem \ref{thm: F fibration}. As for the second claim, recall that an ENR subset $A\subseteq X$ is \emph{categorical} 
if it can be deformed 
to a point within $X$, and $\cat(X)$ is the minimal number of categorical subsets needed to cover $X$. Therefore, in order to prove (2) it is sufficient to show
that every categorical subset  $\as\subseteq\cs\times\ws^r$ admits a roadmap $\rho\colon\as\to\pcs$. Let $D\colon\as\times [0,1] \to\cs\times\ws^r$ be a deformation
of $\as$ to a point $(c_0,w_0)$ (and we may assume without loss of generality that $(c_0,w_0)\in{\rm gr}(F)$).
For every $(c,w)\in\as$ we define 
$$
\rho(c,w)(t):=\left\{\begin{array}{ll}
{\rm pr}_\cs(D(c,w,2t)) &; \ \ 0\le t\le\frac{1}{2}\\
\Gamma(c_0,{\rm pr}_\ws(D^-(c,w,-))(2t-1) &; \ \ \frac{1}{2} \le t \le 1
\end{array}\right.
$$
where $\Gamma$ is the path-lifting function for the regular map $F\colon\cs^r\to\ws^r$ and $D^-(c,w,-)$ is the reverse of the path $D(c,w,-)$. 
Note that $\Gamma$ may be applied because the path ${\rm pr}_\ws(D^-(c,w,-))$ is entirely contained in $\ws^r$. It is easy to verify that 
$\rho\colon \as\to\pcs$ is a robust roadmap. As every categorical subset of $\cs\times\ws^r$ admits a roadmap, we conclude that the complexity 
of $F$ over $\cs\times \ws^r$ does not exceed the category of $\cs\times\ws^r$.
\end{proof}

Note that if $F$ is regular then part (2) gives the estimate $\cx(F)\le\cat(\cs\times\ws)$, that we haven't mention previously
because it is usually weaker than the estimate that we proved in Theorem \ref{thm: F fibration}. For relative complexity the
situation is different as the two estimates refer to different sets of queries.

Inverse kinematics for kinematic maps with singularities can be quite complicated. In fact, even  if the map
$F\colon\cs\to\ws$ is regular one cannot expect to find in general a global inverse kinematic map - 
cf. \cite{Gottlieb:IEEE} for a discussion of obstructions to the existence of inverse kinematic maps. 
Thus we will usually first partition the set of regular values of $F$ into subspaces that admit robust inverse kinematic 
maps, and then study separately the possibility to extend motion plans near the singular values of $F$. 

Let us assume that the forward kinematic 
map $F\colon\cs\to\ws$ admits a continuous inverse kinematic map $I\colon\ws'\to\cs$ over some subspace $\ws'$ of the working
space of the robot. 
The next theorem gives a general estimate of the relative complexity and two important cases when equality holds.

\begin{theorem}
\label{thm: relative sectioned fibration}
Assume that the map $F\colon\cs\to\ws$ admits a continuous partial right inverse $I\colon\ws'\to\cs$ over a subset
 $\ws'\subseteq\ws$, and let $\cs':=F^{-1}(\ws')$. Then
\begin{enumerate}
\item $\cx(\ws|\ws'\times\ws')\le \cx(F|\cs\times\ws')\le \cx(\cs|\cs\times I(\ws'));$
\item If $\cs'$ can be deformed to $I(\ws')$, then $\cx(F|\cs\times\ws')=\cx(\cs|\cs\times I(\ws'));$
\item If $\ws'\subseteq\ws^r$, then $\cx(F|\cs'\times\ws')=\cx(\ws|\ws'\times\ws').$
\end{enumerate}
\end{theorem}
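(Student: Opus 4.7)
The plan is to establish the three statements by transplanting the roadmap constructions of Theorems \ref{thm: sectioned F} and \ref{thm: F fibration} to the relative setting and checking that the domain covers transfer correctly. For part (1), the upper bound $\cx(F|\cs\times\ws')\le\cx(\cs|\cs\times I(\ws'))$ follows as in the second half of Theorem \ref{thm: sectioned F}: a partial roadmap $\rho\colon\qs\to\pcs$ for $\Id_\cs$ with $\qs\subseteq\cs\times I(\ws')$ yields $\bar\rho(c,w):=\rho(c,I(w))$ on $\overline\qs=\{(c,w)\in\cs\times\ws'\mid(c,I(w))\in\qs\}$. Dually, the lower bound $\cx(\ws|\ws'\times\ws')\le\cx(F|\cs\times\ws')$ is produced by $\bar\rho(w,w'):=F\circ\rho(I(w),w')$, and since $I$ is a right inverse, covers of $\cs\times\ws'$ push to covers of $\ws'\times\ws'$.

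For part (2), the upper bound is supplied by (1), so only the reverse inequality $\cx(\cs|\cs\times I(\ws'))\le\cx(F|\cs\times\ws')$ remains. Fix a deformation $D\colon\cs'\times[0,1]\to\cs'$ with $D(\cdot,0)=\Id$ and $D(c,1)\in I(\ws')$. Given a roadmap $\rho\colon\qs\to\pcs$ for $F$ with $\qs\subseteq\cs\times\ws'$, set $\overline\qs:=\{(c,c')\in\cs\times I(\ws')\mid(c,F(c'))\in\qs\}$ and write $c'':=\rho(c,F(c'))(1)$. I would then define $\bar\rho(c,c')$ as a concatenation of three legs: first follow $\rho(c,F(c'))$ from $c$ to $c''$; next follow $D(c'',\cdot)$ from $c''$ to $D(c'',1)\in I(\ws')$; and finally traverse the path $s\mapsto I(F(D(c'',1-s)))$, which lies in $I(\ws')$ and runs from $D(c'',1)$ to $I(F(c''))=I(F(c'))=c'$. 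This final leg corrects the endpoint using only the identity $I\circ F|_{I(\ws')}=\Id_{I(\ws')}$ and does not require the deformation $D$ to preserve fibres of $F$.

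For part (3), the lower bound $\cx(\ws|\ws'\times\ws')\le\cx(F|\cs'\times\ws')$ is again supplied by (1) applied to the restricted map $F\colon\cs'\to\ws'$. For the reverse inequality, the assumption $\ws'\subseteq\ws^r$ makes this restriction regular, so Ehresmann's theorem furnishes a lifting function $\Gamma\colon\cs'\sqcap P(\ws')\to P(\cs')$. Given a roadmap $\rho$ for $\Id_{\ws'}$ on $\qs\subseteq\ws'\times\ws'$, I would set $\bar\rho(c,w'):=\Gamma(c,\rho(F(c),w'))$ on the subset $\{(c,w')\in\cs'\times\ws'\mid(F(c),w')\in\qs\}$, mimicking the proof of Theorem \ref{thm: F fibration} word for word. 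The hardest part I expect is the middle construction in (2), namely arranging a path from an arbitrary $c''\in F^{-1}(F(c'))$ back to the specific point $c'\in I(\ws')$ without assuming that $D$ preserves fibres; the three-piece concatenation above is the device that handles this. The remaining verifications (continuity at the joins, push-forward of covers, and inheritance of the ENR property) are routine.
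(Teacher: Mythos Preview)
Your arguments for parts (1) and (3) match the paper's essentially word for word. The interesting comparison is in part (2).

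The paper uses only a \emph{two}-leg concatenation: first follow $\rho(c,F(c'))$ from $c$ to $c'':=\rho(c,F(c'))(1)$, then follow $t\mapsto D(c'',t)$. For this to terminate at $c'$ one needs $D(c'',1)=c'$; since $F(c'')=F(c')$ and $c'=I(F(c'))$, this holds provided $D(c,1)=I(F(c))$ for every $c\in\cs'$, i.e.\ the deformation carries each point of $\cs'$ to its section value. (The paper actually writes the condition as $D(c,1)=I(F(D(c,1)))$, which literally only says $D(c,1)\in I(\ws')$; read that way the two-leg formula need not end at $c'$, so the intended hypothesis is the stronger one $D(c,1)=I(F(c))$.)

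Your three-leg construction is built precisely to cope with the weaker endpoint condition $D(c,1)\in I(\ws')$: the corrective leg $s\mapsto I(F(D(c'',1-s)))$ moves the terminus from $D(c'',1)$ to $I(F(c''))=c'$. The price you pay is that $D$ must take values in $\cs'$ (so that $F\circ D$ lands in the domain $\ws'$ of $I$), whereas the paper allows $D$ to range over all of $\cs$. Thus each argument tacitly strengthens the bare hypothesis ``$\cs'$ can be deformed to $I(\ws')$'': the paper needs $D(c,1)=I(F(c))$, you need the track of the deformation to remain in $\cs'$. In the paper's applications (the universal joint and the triple-roll wrist) both extra conditions are easily arranged, so the distinction does not affect any of the computations; but you should be aware that your codomain restriction $D\colon\cs'\times[0,1]\to\cs'$ is an added assumption relative to the theorem as stated.
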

\begin{proof}
To verify statement (1) consider  
$$\cx(\ws|\ws'\times\ws')\le \cx(F|I(\ws')\times\ws')\le\cx(F|\cs\times\ws')\le \cx(\cs|\cs\times I(\ws')),$$
where the first and third inequalities are proved by the same argument as in Theorem \ref{thm: sectioned F}
while the second inequality follows from Proposition \ref{prop: rel cx}(2). Similarly, the statement (3) 
is analogous to Corollary \ref{cor: sectioned fibration} and is proved by combining relative versions of Theorems \ref{thm: sectioned F} 
and \ref{thm: F fibration}.

As for (2) observe that in general the requirement that a roadmap for a query $(c,w)$ ends in $I(w)$ is quite restrictive and we will give an example 
in Section \ref{subsec universal joint} where 
$\cx(\cs|\cs\times I(\ws'))$ is strictly bigger than $\cx(F|\cs\times\ws')$. However, if $\cs'$ can be deformed to $I(\ws')$, then we will show 
that  $\cx(F|\cs\times\ws')\ge \cx(\cs|\cs\times I(\ws'))$, and so the two complexities coincide. 
Let $D\colon \cs'\times [0,1]\to \cs$ be a deformation such that $D(c,0)=c$ and $D(c,1)=I(F(D(c,1)))$ for every $c\in\cs'$, and assume that
$\as\subseteq \cs\times \ws'$ admits a robust roadmap $\rho\colon\as\to\pcs$. Then 
the formula
$$
\bar\rho(c,c')(t):=\left\{\begin{array}{ll}
\rho(c,F(c'))(2t) &; \ \ 0\le t \le\frac{1}{2}\\
D\big(\rho(c,F(c'))(1),2t-1\big) &; \ \ \frac{1}{2}\le t \le 1
\end{array}\right.
$$
defines a roadmap $\bar\rho\colon\bar\as\to\pcs$ where $\bar\as=\{(c,c')\in \cs\times I(\ws')\mid (c,F(c'))\in\as\}$.  
If the domains $\as_1,\ldots,\as_n$ cover $\cs\times\ws'$, then the corresponding domains $\bar\as_1,\ldots,\bar\as_n$ cover $\cs\times I(\ws')$, therefore
$\cx(F|\cs\times\ws')\ge \cx(\cs|\cs\times I(\ws'))$. 
\end{proof}

In many applications the configurations space $\cs$ is given as the cartesian product of circles (namely, parameter spaces of individual joints), 
therefore it possesses the structure of a topological group. The following result then allows a precise computation of the upper bound in the above theorem.

\begin{theorem}
\label{thm: top group}
If $\cs$ is a topological group, then 
$$\cx(\cs|\cs\times\cs')=\cx(\cs)=\cat(\cs)$$
for every nonempty subspace $\cs'\subseteq\cs$. 
\end{theorem}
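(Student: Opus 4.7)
The plan is to sandwich the relative complexity between $\cat(\cs)$ and $\cat(\cs)$, establishing $\cx(\cs|\cs\times\cs')=\cat(\cs)$; the special case $\cs'=\cs$ then also yields $\cx(\cs)=\cat(\cs)$ in a single stroke, without having to cite Farber's equality $\TC(G)=\cat(G)$ for topological groups as a black box.

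For the lower bound, I would pick any point $c_0\in\cs'$ (possible since $\cs'$ is nonempty), observe that $\cs\times\{c_0\}\subseteq\cs\times\cs'$, and apply Proposition \ref{prop: rel cx}(2) to conclude
\[
\cx(\cs|\cs\times\{c_0\})\le \cx(\cs|\cs\times\cs').
\]
Then I would identify $\cx(\cs|\cs\times\{c_0\})$ with $\cat(\cs)$: a partial roadmap $\rho\colon\qs\to\pcs$ defined on $\qs\subseteq\cs\times\{c_0\}$ is exactly the same data as a nullhomotopy in $\cs$ of the inclusion of $\pr_\cs(\qs)$ to the point $c_0$, namely $(c,t)\mapsto\rho(c,c_0)(t)$. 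Conversely a categorical cover of $\cs$ yields roadmaps on the corresponding sets (using path-connectedness of $\cs$ to connect each contraction point to $c_0$). This gives the equality claimed in the earlier remark (up to reversal of time, which swaps the roles of the two factors).

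For the upper bound, I would use the group structure to convert a categorical cover of $\cs$ into partial roadmaps on $\cs\times\cs$. Let $U_1,\ldots,U_k$ be a categorical cover of $\cs$ (with $k=\cat(\cs)$) and let $h_i\colon U_i\times[0,1]\to\cs$ be deformations with $h_i(u,0)=u$ and $h_i(u,1)=x_i$ for some $x_i\in\cs$. Since left multiplication is a homeomorphism of $\cs$, I may replace $U_i$ by $x_i^{-1}U_i$ and $h_i$ by $\tilde h_i(u,t)=x_i^{-1}h_i(x_iu,t)$ to arrange that each contraction ends at the identity $e\in\cs$; the translated sets still cover $\cs$. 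Defining
\[
V_i:=\{(c,c')\in\cs\times\cs \mid c^{-1}c'\in U_i\}, \qquad \rho_i(c,c')(t):=c\cdot h_i(c^{-1}c',1-t),
\]
one checks $\rho_i(c,c')(0)=c$ and $\rho_i(c,c')(1)=c\cdot(c^{-1}c')=c'$, so each $\rho_i$ is a partial roadmap and the $V_i$ cover $\cs\times\cs$. Intersecting with $\cs\times\cs'$ gives a cover of $\cs\times\cs'$ by $k$ partial roadmaps, hence $\cx(\cs|\cs\times\cs')\le\cat(\cs)$.

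The only mildly delicate step is arranging the contractions to terminate at $e$ so that the multiplication formula produces paths with the correct endpoints; this is precisely where the hypothesis that $\cs$ is a topological group (and not merely an $H$-space or a homogeneous space) enters, since we need the translation $U_i\mapsto x_i^{-1}U_i$ to be a homeomorphism sending contractions to contractions. Everything else is a direct application of the monotonicity property of relative complexity and the definition of partial roadmap.
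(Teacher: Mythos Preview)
Your overall strategy---sandwiching $\cx(\cs|\cs\times\cs')$ between $\cat(\cs)$ from below and $\cat(\cs)$ from above---is exactly the paper's, though the paper simply quotes Farber's results \cite[Lemma 8.2]{Farber:IRM} and \cite[Lemma 4.29]{Farber:ITR} for the two endpoints while you unpack them. That makes your argument more self-contained, which is fine.

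There is, however, a genuine error in your upper-bound construction. You assert that after replacing each $U_i$ by $x_i^{-1}U_i$, ``the translated sets still cover $\cs$.'' This is false in general: take $\cs=S^1\subset\CC$ with $U_1=S^1\setminus\{1\}$ contracting to $x_1=-1$ and $U_2=S^1\setminus\{-1\}$ contracting to $x_2=1$; then $x_1^{-1}U_1=x_2^{-1}U_2=S^1\setminus\{-1\}$, and the translates miss the point $-1$. Left-translating each member of a cover by a \emph{different} group element need not produce a cover.

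The repair is easy and uses an ingredient you already invoked for the lower bound: keep the original sets $U_1,\dots,U_k$ and instead modify the \emph{contractions}, appending to each $h_i$ a fixed path $\gamma_i$ in $\cs$ from $x_i$ to $e$ (path-connectedness of $\cs$ is implicit throughout, since otherwise $\cx(\cs)$ is not even finite). The resulting homotopies $\tilde h_i$ still contract the same $U_i$, now to $e$, and your formula $\rho_i(c,c')(t)=c\cdot\tilde h_i(c^{-1}c',1-t)$ then gives the desired partial roadmaps on the sets $V_i=\{(c,c'):c^{-1}c'\in U_i\}$, which do cover $\cs\times\cs$. With this correction the proof goes through.
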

\begin{proof}
If $\cs$ is a topological group, then the complexity of $\cs$ is equal to its Lusternik-Schnirelmann category - see \cite[Lemma 8.2 ]{Farber:IRM}.
Let $c_0$ be any configuration in $\cs'$. Then $\cat(\cs)=\cx(\cs|\cs\times\{c_0\})$ by \cite[Lemma 4.29]{Farber:ITR} so we obtain the following 
chain of (in)equalities
$$\cat(\cs)=\cx(\cs|\cs\times\{c_0\})\le\cx(\cs|\cs\times\cs')\le\cx(\cs)=\cat(\cs).$$
\end{proof}

In our final result we will relate the complexity of $F$ to the number of partial right inverses (also called partial \emph{sections}) 
that are needed to cover all points in its codomain. 
Let $\rho\colon\qs\to \pcs$ be a partial robust roadmap over some $\qs\subseteq \cs\times\ws$, and let $\qs':=\{w\in\ws\mid (c_0,w)\in\qs\}$ for some 
fixed element $c_0\in\cs$. Then the formula $I_\rho(w):=\rho(c_0,w)(1)$ defines a robust map $I_\rho\colon \qs'\to\cs$ such that $F(I_\rho(w))=w$ 
for all $w\in\qs'$, therefore $I_\rho$ is a partial section of $F$. Moreover, one can define a deformation 
$D_\rho\colon I_\rho(\qs')\times [0,1]\to\cs$ as $D_\rho(c,t):=\rho(c_0,F(c))(t)$, so that $D_\rho(c,0)=c_0$ and $D_\rho(c,1)=c$ for all $c\in I_\rho(\qs')$.
In other words, $I_\rho$ has the additional property that its image can be deformed within $\cs$ to a point. We will say that $I_\rho$ is a \emph{categorical
partial section} for $F$. Let $\gsec(F)$ denote the minimal number of partial sections of $F$ that are needed to cover all points of $\ws$, and let
$\csec(F)$ be the minimal number of categorical partial sections of $F$ that are needed to cover all points of $\ws$. Clearly $\gsec(F)\le\csec(F)$. 

\begin{theorem}
The complexity of a map $F\colon\cs\to\ws$ satisfies the following inequality
$$\csec(F)\le\cx(F)\le\sum_{k=1}^{\gsec(F)} \cx(\cs|\cs\times I_k(\qs_k)),$$
where $I_k\colon\qs_k\to\cs$ are partial sections of $F$ and $\ws=\qs_1\cup\ldots\cup\qs_{\gsec(F)}$.
In particular, if $\cx(\cs|\cs\times I_k(\qs_k))=1$ for all $k$, then $\cx(F)=\gsec(F)=\csec(F)$.
\end{theorem}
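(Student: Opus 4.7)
The lower bound $\csec(F)\le\cx(F)$ should follow almost directly from the construction presented in the paragraph immediately preceding the theorem. Given an optimal cover $\rho_1,\ldots,\rho_{\cx(F)}$ of $\cs\times\ws$ by robust roadmaps with domains $\qs_i$, I fix any basepoint $c_0\in\cs$ and restrict: set $\qs'_i:=\{w\in\ws\mid (c_0,w)\in\qs_i\}$. The formula $I_{\rho_i}(w):=\rho_i(c_0,w)(1)$ defines, as already observed, a categorical partial section of $F$ over $\qs'_i$. The $\qs'_i$ cover $\ws$ because for any $w\in\ws$ the point $(c_0,w)$ must lie in some $\qs_i$. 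Hence $\csec(F)\le \cx(F)$.

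For the upper bound I would apply Theorem \ref{thm: relative sectioned fibration}(1) to each piece of the section cover. Pick partial sections $I_k\colon\qs_k\to\cs$ realising $\gsec(F)$, with $\ws=\qs_1\cup\ldots\cup\qs_{\gsec(F)}$. The first inequality of Theorem \ref{thm: relative sectioned fibration}(1) applied with $\ws'=\qs_k$ and $I=I_k$ yields
$$\cx(F|\cs\times\qs_k)\le \cx(\cs|\cs\times I_k(\qs_k)).$$
Since the products $\cs\times\qs_k$ cover $\cs\times\ws$, the subadditivity in Proposition \ref{prop: rel cx}(3) (applied inductively) gives
$$\cx(F)=\cx(F|\cs\times\ws)\le\sum_{k=1}^{\gsec(F)}\cx(F|\cs\times\qs_k)\le\sum_{k=1}^{\gsec(F)}\cx(\cs|\cs\times I_k(\qs_k)),$$
which is the required upper estimate.

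The final \emph{in particular} claim is then immediate by squeezing. If $\cx(\cs|\cs\times I_k(\qs_k))=1$ for every $k$, the upper bound collapses to $\cx(F)\le \gsec(F)$. Combining this with $\gsec(F)\le\csec(F)$ (noted just before the theorem statement) and with $\csec(F)\le\cx(F)$ from the first part gives the chain
$$\gsec(F)\le\csec(F)\le\cx(F)\le\gsec(F),$$
so all three quantities coincide.

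The only subtle point that needs care is the lower bound: one has to ensure that $I_{\rho_i}$ really is \emph{categorical}, not just an ordinary partial section. This is exactly what the contracting deformation $D_{\rho_i}(c,t):=\rho_i(c_0,F(c))(t)$ provides, contracting $I_{\rho_i}(\qs'_i)$ through the configuration space to the point $c_0$. So the main obstacle is just double-checking that the ENR hypotheses pass through these restriction and union operations, which they do since $\cs$ and $\ws$ are ENR and the $\qs_k$, $I_k(\qs_k)$, $\qs'_i$ are taken to be ENR subspaces throughout.
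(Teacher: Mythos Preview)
Your proof is correct and follows essentially the same route as the paper's own argument: the lower bound via the categorical partial sections $I_{\rho_i}$ constructed just before the theorem, and the upper bound via Proposition~\ref{prop: rel cx}(3) combined with Theorem~\ref{thm: relative sectioned fibration}(1). One cosmetic slip: what you call ``the first inequality of Theorem~\ref{thm: relative sectioned fibration}(1)'' is actually the right-hand inequality in that chain, but the substance of what you apply is correct.
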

\begin{proof}
Let $\rho_1,\ldots,\rho_{\cx(F)}$ be some minimal set of roadmaps that cover $\cs\times\ws$. By the above discussion, there exist
categorical partial sections $I_{\rho_1},\ldots,I_{\rho_{\csec(F)}}$  whose domains cover $\ws$, therefore $\csec(F)\le\cx(F)$. 

The second inequality follows from Proposition \ref{prop: rel cx}(3) and Theorem \ref{thm: relative sectioned fibration}(1) as we have
$$\cx(F)\le\sum_{k=1}^{\gsec(F)} \cx(F|\cs\times \qs_k)\le \sum_{k=1}^{\gsec(F)} \cx(\cs|\cs\times I_k(\qs_k)).$$
\end{proof}

Note that in the last theorem we did not assume that $F$ is regular at any point, so the inequality may be applied to estimate 
the complexity of arbitrary continuous maps.

\section{Examples and computations}

In this section we consider several examples of robot mechanisms that arise in practice and apply our results to estimate the motion planning complexity of 
their forward kinematic maps.

\subsection{One revolute joint}

Let us begin with a system consisting of one revolute joint as in Fig. \ref{fig: one joint}. 
Its configuration space $\cs$ is the set of all possible angles of rotation 
of the joint, and so it may be identified with the unit circle $T$. If the end-effector is the tip of the arm, then the working space $\ws$ is 
also the circle and the forward kinematic map is the identity. The complexity $\cx(\Id_\cs)=\cx(T)$ which is known to be equal 2 (see \cite[p.213]{Farber:TCMP}
for explicit description of roadmaps). 
\begin{figure}[ht]
    \centering
    \includegraphics[scale=0.5]{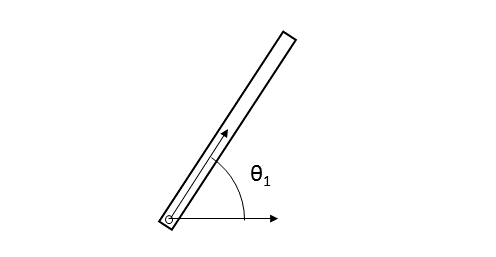}
    \caption{Position of the arm is completely described by the displacement angle $\theta$. Configuration space is $T$.}
    \label{fig: one joint}
\end{figure}
If instead of the identity map we imagine the end-effector to be connected to the joint by some transmission mechanism
so that one full rotation of the joint correspond to several rotations of the end-effector then the configuration space and working space are still equal to $T$ 
but the kinematic map  may be described as $F(\theta)=k\theta$ where $k$ is the transmission ratio. It is clear that the kinematic map is regular but, somewhat
surprisingly, it does not admit an inverse kinematic map (unless $k=1$). In fact, to define the inverse kinematic map $I\colon T\to T$ one must first define 
the value of $I(0)=\theta_0$ among angles $\theta$ for which $k\theta$ is a multiple of $2\pi$. This choice then uniquely defines $I(\theta)=\theta_0+\theta/k$
for $\theta\in [0,2\pi)$, but if $k\ne 1$ the resulting map is clearly not continuous when $\theta$ approaches $2\pi$. By Theorems \ref{thm: cx 1 implies section}
and \ref{thm: F fibration} we have
$$1<\cx(F)\le\cx(T)=2,$$
and so $\cx(F)=2$. Observe that the proof of Theorem \ref{thm: F fibration} also provides explicit roadmaps for $F$.

It often happens in practice that the configuration space or the working space are constrained for various reasons. Imagine for example that the robot arm
can rotate only some finite amount of full circles (e.g. because of the wiring). Then the configuration space is just an interval, say of the form 
$\cs=[-\theta_{max},\theta_{max}]$. We may interpret this situation as an instance of relative complexity $\cx(F|[-\theta_{max},\theta_{max}]\times\ws)$ which is 
as before bounded above by $\cx(T)=2$, but it cannot be equal to 1 because $F$ does not admit a section. 

Finally, assume that both the configuration and working space are restricted to intervals, and the forward kinematic map is a bicontinuous bijection.
Then the complexity of $F$ equals the complexity of the interval, which is 1 by \cite[Theorem 1]{Farber:TCMP}. However, if the end-effector is connected to a transmission
mechanism so that it takes several full rotations of the joint to move the end-effector between two position in the working space, then
there does not exist a continuous inverse kinematic map and one still needs two robust roadmaps to navigate this simple mechanism.

\subsection{Two planar revolute joints}

Given two revolute joints, they may be pinned together so that they rotate in the same plane or in different (usually perpendicular) planes. 
We begin with the planar case, which is simpler - see Fig. \ref{fig: two planar joints}.  If the rotation around both joints is unobstructed 
then the configuration space may be identified 
with the cartesian product of two circles, $\cs=T^2$. If, as is normally the case, the first arm is longer than the second, then the working space
of the end-effector is easily seen to be an annulus $\ws=\{(x,y)\in\RR^2\mid R_1-R_2\le \sqrt{x^2+y^2}\le R_1+R_2\}$. 
The forward kinematic map $F\colon\cs\to\ws$ can be described using polar coordinates as 
$$F(\theta_1,\theta_2)=\big(R_1\cos\theta_1+R_2\cos(\theta_1+\theta_2),R_1\sin\theta_1+R_2\sin(\theta_1+\theta_2)\big).$$
To each position in the working space correspond two joint configurations, except when both arms are parallel, so it is easy to explicitly determine 
an inverse kinematic function $I\colon\ws\to\cs$, e.g. by always choosing the `elbow down' joint position. 
\begin{figure}[ht]
    \centering
    \includegraphics[scale=0.5]{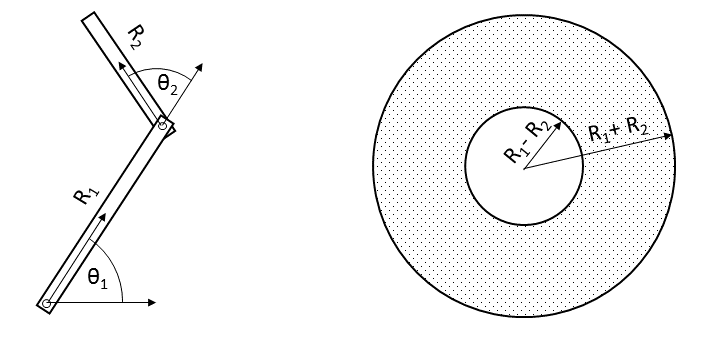}
    \caption{Position of the arm is completely described by the angles $\theta_1$ and $\theta_2$. Configuration space is $T^2$, while the 
    working space is the annulus $T\times[R_1-R_2,R_1+R_2]$.}
    \label{fig: two planar joints}
\end{figure}

Thus, by Theorem \ref{thm: sectioned F}, together
with the computations of complexity for $T$ and $T^2$ (see \cite[Theorem 12]{Farber:TCMP}) we have 
$$2=\cx(T\times [R_1-R_2,R_1+R_2])\le \cx(F) \le \cx(T^2)=3.$$
To obtain the precise value, observe that the restriction of $F$ on $\cs'=\{(\theta_1,0)\mid\theta_1\in T\}\subset\cs$ is injective, so a roadmap from $\cs$ to $F(\cs')$ 
is essentially the same as a roadmap in $\cs$ from $\cs$ to $\cs'$. Then we may 
apply Proposition \ref{prop: rel cx} and Theorem \ref{thm: top group} to get
$$\cx(F)\ge\cx(F|\cs\times F(\cs'))=\cx(\cs|\cs\times\cs')=\cx(\cs)=3,$$
therefore the complexity of $F$ is 3. Explicit roadmaps for $F$ can be derived from the proof of Theorem \ref{thm: sectioned F}.

We may extend the above reasoning to a system of $n$ planar joints. In fact, the configuration space is the cartesian product of $n$ circles, $\cs=T^n$,
while the working space $\ws$ is either a disk or an annulus, depending on the relative lengths of the robot arms. The forward kinematic map $F\colon\cs\to\ws$
is given by
\begin{align*}
F(\theta_1,\ldots,\theta_n)=&\big(R_1\cos\theta_1+R_2\cos(\theta_1+\theta_2)+\ldots+R_n\cos(\theta_1+\ldots+\theta_n),\\
 & \ \, R_1\sin\theta_1+R_2\sin(\theta_1+\theta_2)+\ldots+R_n\sin(\theta_1+\ldots+\theta_n)\big),
\end{align*} 
and it is easy to see that it admits an inverse kinematic map $I\colon\ws\to\cs$. 
For example, if the length of the first arm exceeds the sum of the lengths of the remaining arms (so that $\ws$ is an annulus),
then one may define inverse kinematics by letting $\theta_3=\ldots=\theta_n=0$ and choosing $\theta_1$ and $\theta_2$ as in 
the two-arm case. Furthermore, the restriction of $F$ on 
$\cs'=\{(\theta_1,0,\ldots,0)\mid \theta_1\in T\}\subset\cs$ is injective, so we have 
$$\cx(\cs)=\cx(\cs|\cs\times\cs')= \cx(F|\cs\times F(\cs'))\le\cx(F)\le\cx(\cs).$$
Therefore, by \cite[Theorem 12]{Farber:TCMP}, $\cx(F)=\cx(T^n)=n+1$.

\subsection{Universal joint}
\label{subsec universal joint}
Universal joint (also called Cardan joint) consists of two revolute joints whose axes intersect orthogonally, as in Fig. \ref{fig: universal joint}. 
The joint has two degrees of freedom, 
its configurations space is the cartesian product of two circles, $\cs=T^2$, and the space of positions that can be reached by its end-effector
can be parametrized by the points on the two-dimensional sphere, so $\ws=S^2$.
\begin{figure}[ht]
    \centering
    \includegraphics[scale=0.5]{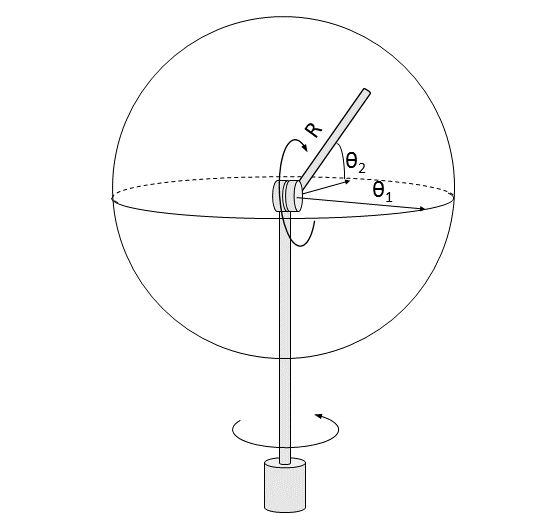}
    \caption{Position of the arm is completely described by the angles $\theta_1$ and $\theta_2$. The end-effector can reach every point on the sphere, 
    centered at the second joint. Configuration space is $T^2$ and the working space is the sphere $\ws=S^2$.}
    \label{fig: universal joint}
\end{figure}
The forward kinematic map can be described using spherical coordinates
$$F(\theta_1,\theta_2)=(R\cos\theta_1\cos\theta_2,R\sin\theta_1\cos\theta_2,R\sin\theta_2).$$
The computation of the Jacobian matrix detects the two well-known gimbal lock positions, namely when $\theta_2=\pm\frac{\pi}{2}$ and the end-effector points
at the north ($N$) or south ($S$) pole of the sphere. Therefore $\ws^r=S^2-\{N,S\}$, and $\cs^r=T\times \big(T-\{-\frac{\pi}{2},\frac{\pi}{2}\}\big)$.
The map $I\colon\ws^r\to\cs^r$ that to each point assigns its unique longitude $\theta_1$ and latitude $\theta_2$ satisfying the requirement that 
$\theta_2\in (-\frac{\pi}{2},\frac{\pi}{2})$, is clearly a robust inverse kinematic map for $F$. In order to compute the complexity of $F$ we begin with the 
following estimate based on properties (2) and (3) of Proposition \ref{prop: rel cx}:
$$\cx(F|\cs\times \ws^r)\le\cx(F)\le \cx(F|\cs\times \ws^r)+\cx(F|\cs\times \{N,S\}).$$
Note that we can define another inverse kinematic map $I'\colon\ws^r\to\cs^r$ by choosing latitude in the interval $(\frac{\pi}{2},\frac{3\pi}{2})$, and 
that $\cs^r=I(\ws^r)\cup I'(\ws^r)$. It is easy to see that $I'(\ws^r)$ can be horizontally deformed within $\cs$ to $I(\ws^r)$.
Therefore $\cs^r$ can be deformed into $I(\ws^r)$, so we may apply 
Theorem \ref{thm: relative sectioned fibration}(2), and then Theorem \ref{thm: top group} and \cite[Theorem 12]{Farber:TCMP} to compute 
$$\cx(F|\cs\times \ws^r)=\cx(\cs|\cs\times I(\ws^r))=\cx(\cs)=3.$$
To determine $\cx(F|\cs\times \{N,S\})$ we first assume that $\cx(F|\cs\times \{N\})=1$. Then, by the proof of Proposition \ref{prop: cx=1} and the comments after it,
there exists a horizontal deformation of $\cs\times\{N\}$ to the graph of $F$. But a horizontal deformation would contract $\cs$ to a point within
$\cs\times\ws$, which is clearly impossible, therefore $\cx(F|\cs\times \{N\})>1$. On the other side, we may define two explicit roadmaps over $\cs\times\{N\}$: let 
$$\qs_1:=\left\{(\theta_1,\theta_2,N)\in \cs\times\{N\}\mid \theta_2\neq -\frac{\pi}{2}\right\},$$ 
$$\rho_1(\theta_1,\theta_2,N)(t):=\left(\theta_1,(1-t)\theta_2+\frac{t\pi}{2}\right),$$
and
$$\qs_2:=\left\{(\theta_1,\theta_2,N)\in \cs\times\{N\}\mid \theta_2= -\frac{\pi}{2}\right\}, $$
$$\rho_2(\theta_1,-\frac{\pi}{2},N)(t):=\left(\theta_1,-\frac{\pi}{2}+t\pi\right).$$
Analogous formulas define  roadmaps $\rho'_1\colon\qs'_1\to\pcs$ and $\rho'_2\colon\qs'_2\to\pcs$ for $\cs\times\{S\}$. 
Since $\cs\times\{N\}$ and $\cs\times\{S\}$ are disjoint, we may combine $\rho_1$ and $\rho'_1$ into a robust roadmap on $\qs_1\cup\qs'_1$, and similarly
$\rho_2$ and $\rho'_2$ into a robust roadmap on $\qs_2\cup\qs'_2$, 
which implies that $\cx(F|\cs\times\{N,S\})=2$. Note that the map $F$ admits an obvious inverse kinematics over the one point space $\{N\}$ (namely, choose
any point with $\theta_2=\frac{\pi}{2}$), but $\cx(\cs|\cs\times\{I(N)\})=\cx(\cs)=3$, so this gives an example where $\cx(\cs|\cs\times I(\ws'))$ is strictly
bigger than $\cx(F|\cs\times\ws')$.

At this point we know that the complexity of $F$ is between 3 and 5. We are going to examine the instability of the roadmaps around the singular points of $F$ 
and show that $\cx(F)$ is in fact at least 4. Assume that there exists a motion plan for $F$ that consists of robust roadmaps $\rho_i\colon\qs_i\to\pcs$ for $i=1,2,3$. 
By restriction we obtain 3 roadmaps on $\cs\times\ws^r$, which is by above computation also the minimal number of roadmaps 
necessary to cover $\cs\times\ws^r$. Therefore, we may find robust roadmaps $\bar\rho_i\colon\overline\qs_i\to\pcs$ that cover 
$\cs\times I(\ws^r)$ and for which $\rho_i(c,w)=\bar\rho_i(c,I(w))$ for every $i$ and every $(c,w)\in\qs_i\cap(\cs\times\ws^r)$. 
Moreover, if we choose a small ball $B$ around $N\in\ws$, the complexity $\cx(\cs|\cs\times I(B-\{N\}))$ is still equal to 3, so 
$\overline\qs_i\cap (\cs\times I(B-\{N\}))\ne\emptyset$ for $i=1,2,3$. It follows that for each $i=1,2,3$ we may find sequences $(c_j,w_j)$ and $(c'_j,w'_j)$
in $\qs_i$ converging to $(c,N)$, but such that $(c_j,I(w_j))$ and $(c'_j,I(w'_j))$ converge to different points 
in $\cs\times F^{-1}(N)$
(i.e. $(w_j)$ and $(w'_j)$ converge to $N$ from different directions). 
But then we would have 
$$\lim_j \rho_i(c_j,w_j)=\lim_j\bar\rho_i(c_j,I(w_j))\ne\lim_j\bar\rho_i(c'_j,I(w'_j))=\lim_j \rho_i(c'_j,w'_j),$$
which is a contradiction. We conclude that the motion planning for the universal joint requires at least four robust roadmaps.

\subsection{Triple-roll wrist} 

Triple-roll wrist is a compound joint consisting of three revolute joints whose axes pass through a common point - see Fig. \ref{fig: triple joint}. 
By rotating the individual joints the end-effector can assume any orientation in the three-dimensional space. The device has three degrees of freedom, its configuration space 
is the cartesian product of three circles, $\cs=T^3$, and the working space consists of all possible orientations of the end-effector, $\ws=SO(3)$.
There exist several ways to relate the positions of the joints to the resulting orientation, the most common being through 
the Euler angles, see \cite[Chapter 1.2]{Handbook} for explicit formulas for the forward kinematic map $F\colon\cs\to\ws$. 
\begin{figure}[ht]
    \centering
    \includegraphics[scale=0.5]{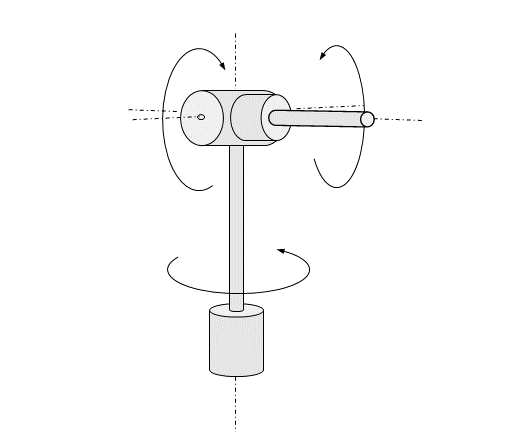}
    \caption{Position of the arm is completely described by three angles of rotation around respective joints. The end-effector can assume arbitrary orientation
    in the space. Configuration space is $T^3$ and the working space is $SO(3)$.}
    \label{fig: triple joint}
\end{figure}
The computation of $\cx(F)$ is similar to the one for the universal joint, just more complicated. We are going to sketch the main steps and leave the detailed verification
to the reader. Let $\cs=T^3$, the cartesian product of three circles, $\ws=SO(3)$ the set of orthogonal matrices with determinant 1, 
and the forward kinematic map $F\colon\cs\to\ws$ given by Euler angles, following the $X-Z-X$ convention. Then
\begin{enumerate}
\item The set of singular values $\ws^s$ of $F$ consists of all matrices of the form
$$R=\left[\begin{array}{ccc}
\cos\theta & -\sin\theta & 0 \\
\sin\theta & \cos\theta & 0\\
0 & 0 & 1 \end{array}\right]
$$
They correspond to rotations whose $Z$-axis coincide with the $Z$-axis of the reference frame. Clearly, the set of singular values $\ws^s$ may be viewed 
as a circle embedded in $SO(3)$, and $\ws^r$ is its complement in $SO(3)$. 
\item 
The singular points of $F$ are Euler triples of the form $(\theta_1,0,\theta_3)$ or $(\theta_1,\pi,\theta_3)$, because under $X-Z-X$ convention the second angle
corresponds to the rotation that moves the $Z$-axis of the reference frame to that of the represented rotation. We may therefore identify the set of singular points
with the cartesian product $T\times\{0,\pi\}\times T\subset T^3=\cs$. Geometrically speaking, that is a disjoint union ot two two-dimensional tori in $\cs$. 
\item 
To every regular value of $F$ corresponds a unique Euler triple $(\theta_1,\theta_2,\theta_3)$  where $\theta_1,\theta_3\in T$ and $\theta_2\in (0,\pi)$. 
This correspondence determines an inverse kinematic map $I\colon\ws^r\to\cs$. Alternatively, if we choose $\theta_2\in(\pi,2\pi)$ then we get another
inverse kinematic map $I'\colon\ws^r\to\cs$, and $\cs^r=I(\ws^r)\cup I'(\ws^r)$. As in the universal joint case, $\cs^r$ may be deformed within $\cs$ to $I(\ws^r)$.
\end{enumerate}

We may now proceed to the computation of $\cx(F)$: by Proposition \ref{prop: rel cx}
$$\cx(F|\cs\times\ws^r) \le \cx(F) \le \cx(F|\cs\times\ws^r)+\cx(F|\cs\times\ws^s).$$
Furthermore, by Theorems \ref{thm: relative sectioned fibration}(2), \ref{thm: top group} and \cite[Theorem 12]{Farber:TCMP}
$$\cx(F|\cs\times\ws^r)=\cx(\cs|\cs\times I(\ws^r))=\cs(T^3)=4.$$ 
Finally, one may use a similar approach as in the previous subsection to
construct two roadmaps from $T^3$ to $T^2\times \{0,\pi\}$ and show that $\cx(F|\cs\times\ws^s)=2$. Thus we may conclude that the complexity
of the triple-roll wrist is between 4 and 6 (and we believe that by a closer analysis of the singular points of $F$ one may actually prove that the complexity
of $F$ is at least 5). 

\subsection{6-DOF joint}
\label{subsec: 6DOF}

Six-degree-of-freedom serial manipulators are among the most common robot arm structure used in various applications. The precise analysis of 
the corresponding forward kinematic map $F\colon T^6\to\RR^3\times SO(3)$ is difficult and depends on the exact configuration of the joints.
Nevertheless, the underlying topology of the configuration and working spaces allows an estimate of the complexity of $F$, regardless 
of the specific joint configuration. To this end we use the cohomological lower bound that was described in \ref{subsec: cohomology}. 

The following results can be found in \cite{Hatcher 02}. The cohomology with $\ZZ_2$ coefficients of the six-dimensional torus $T^6$ and of 
the space of rotations $SO(3)$ are given as follows: the cohomology ring of $T^6$
is $H^*(T^6)=\wedge(x_1,\ldots,x_6)$, the exterior $\ZZ_2$-algebra on 6 generators in dimension 1, while the cohomology ring of $SO(3)$ is
$H^*(SO(3))=\ZZ_2[u]/(u^4)$, the truncated polynomial algebra with a 1-dimensional generator and the relation $u^4=0$. 
A full rotation around some fixed axis represents a homotopically non-trivial loop in $SO(3)$ (in fact the non-trivial element of its fundamental group). 
Since a 6-DOF mechanism allows full rotations we may conclude that the induced homomorphism $F^*\colon H^*(SO(3))\to H^*(T^6)$ is non-trivial, so the image 
of the generator $F^*(u)=s\in H^1(T^6)$ is a non-trivial sum of generators $x_1,\ldots,x_6$.

Based on these facts, it is easy to check that the cohomology class $s\times 1+ 1\times u\in H^*(T^6\times SO(3))$ is contained in the 
kernel of the homomorphism $(1\times F)^*\colon H^*(T^6\times)\to H^*(T^6)$. Moreover, by taking into account that $s^2=0$ and that 
the addition is modulo 2, we obtain 
$$(s\times 1+ 1\times u)^2=1\times u^2\ \ \text{and}\ \ (s\times 1+ 1\times u)^3=s\times u^2+1\times u^3\ne 0,$$
which by Theorem \ref{thm: nilpotency} implies that the complexity of $F$ is at least 4.

\section{Conclusion}
 
The applicability of topological methods in robotics is not as surprising as it may appear at first sight, especially when one considers 
qualitative questions regarding the possibility to find suitable inverse kinematic maps or to avoid singular points of certain configurations.
In this paper we introduced a new topological measure for the complexity of motion planning in robotic systems. 
Unlike the previous approaches to the complexity as developed by \cite{Farber:TCMP} and other authors, where one studies the motion
within a single space (either the configuration or the working space of a system), we constructed a more realistic model that takes into account
the forward kinematic map. We considered queries consisting of two sets of data - the initial configuration of the joints, and the requested final position 
of the end-effector. Then we studied obstructions to the existence of robust algorithms that take those queries as input, and return  movements 
of the joints that start from a given joint configuration and end with the required position of the end-effector. 

It turned out that in most cases one needs several robust algorithms in order to cover all possible queries. 
In particular, we proved that the complexities (i.e. the minimal number of distinct robust algorithms that are needed to cover all queries) 
of some basic joint configurations like the universal joint, the triple-roll wrist and the 6-DOF joint configurations are at least 4.

We believe that this new invariant reflects and explains many of the difficulties in the computation of the inverse kinematic maps and of 
the construction of explicit motion plans, and is thus one of the factors that should be taken into account in the design of specific robot-arm configurations.

\end{document}